\newtheorem{theorem}{Theorem}[section]
\newtheorem{corollary}[theorem]{Corollary}
\newtheorem{lemma}[theorem]{Lemma}
\newtheorem{proposition}[theorem]{Proposition}
\newtheorem{remark}[theorem]{Remark}
\def\J#1#2#3{ \left\{ #1,#2,#3 \right\} }
\def\NN{{\mathbb{N}}}
\def\11{\textbf{$1$}}
\begin{document}

\title[One-parameter groups of orthogonality preservers on C$^*$-algebras]{One-parameter groups of orthogonality preservers on C$^*$-algebras}

\author[J.J. Garc{\' e}s]{Jorge J. Garc{\' e}s}
\email{j.garces@upm.es}
\address{Departamento de Matem{\' a}tica Aplicada a la Ingenier{\' i}a Industrial, ETSIDI, Universidad Polit{\' e}cnica de Madrid, Madrid, Spain.}

\author[A.M. Peralta]{Antonio M. Peralta}
\email{aperalta@ugr.es}
\address{Departamento de An{\'a}lisis Matem{\'a}tico, Facultad de
Ciencias, Universidad de Granada, 18071 Granada, Spain.}

\dedicatory{This paper is dedicated to the memory of Professor Ottmar Loos, a pioneer of Jordan theory.}

\subjclass[2010]{Primary 46L05; 46L70; 47B48, Secondary 46K70; 46L40; 47B47; 47B49; 46B04; 17A40; 17C65.}

\keywords{C$^*$-algebra, JB$^*$-algebra, orthogonality preserver, one-parameter semigroups, one-parameter semigroups of orthogonality preserving operators}

\date{}

\maketitle

\begin{abstract}
 We establish a more precise description of those surjective or bijective continuous linear operators preserving orthogonality between C$^*$-algebras. The new description is applied to determine all uniformly continuous one-parameter semigroups of orthogonality preserving operators on an arbitrary C$^*$-algebra. We prove that given a family $\{T_t: t\in \mathbb{R}_0^{+}\}$ of orthogonality preserving bounded linear bijections on a general C$^*$-algebra $A$ with $T_0=Id$, if for each $t\geq 0,$ we set $h_t = T_t^{**} (1)$ and we write $r_t$ for the range partial isometry of $h_t$ in $A^{**},$ and $S_t$ stands for the triple isomorphism on $A$ associated with $T_t$ satisfying $h_t^* S_t(x)$  $= S_t(x^*)^* h_t$, $h_t S_t(x^*)^* =$ $ S_t(x) h_t^*$, $h_t r_t^* S_t(x) =$  $S_t(x) r_t^* h_t$, and $T_t(x) = h_t r_t^* S_t(x) = S_t(x) r_t^* h_t, \hbox{ for all } x\in A,$ the following statements are equivalent:\begin{enumerate}[$(a)$]\item $\{T_t: t\in \mathbb{R}_0^{+}\}$ is a uniformly continuous  one-parameter semigroup of orthogonality preserving operators on $A$;
\item $\{S_t: t\in \mathbb{R}_0^{+}\}$ is a uniformly continuous one-parameter semigroup of surjective linear isometries (i.e. triple isomorphisms) on $A$ (and hence there exists a triple derivation $\delta$ on $A$ such that $S_t = e^{t \delta}$ for all $t\in \mathbb{R}$), the mapping $t\mapsto h_t $ is continuous at zero, and the identity $ h_{t+s} =  h_t r_t^* S_t^{**} (h_s),$ holds for all $s,t\in \mathbb{R}.$\\
\end{enumerate}
\end{abstract}

\section{Introduction}
Let us address a typical problem on ``preservers''. Suppose $\mathcal{R}$ is a relation between elements in any C$^*$-algebra $A$ which is defined by the C$^*$- structure. A (linear) mapping $T$ from $A$ to a C$^*$-algebra $B$ preserves the relation $\mathcal{R}$ if $a\mathcal{R} b$ in $A$ implies $T(a)\mathcal{R} T(b)$ in $B$. We say that $T$ is bi-$R$-preserving if $a\mathcal{R} b$ in $A$ is equivalent to $T(a)\mathcal{R} T(b)$ in $B$. A very special protagonism is played by the relations ``being orthogonal'' and ``being disjoint''. Elements $a,b$ in a C$^*$-algebra $A$ are \emph{orthogonal} (denoted by $a\perp b$) if $a b^* = b^* a =0$. We say that $a$ and $b$ are \emph{disjoint} or have \emph{zero product} if $a b =0$. These particular relations give rise to the notions of linear (bi-)orthogonality preserving and linear (bi-)separating maps between C$^*$-algebras. These classes of maps have been intensively studied in recent years.\smallskip

In the setting of commutative C$^*$-algebras, elements are orthogonal if and only if they are disjoint. W. Arendt discovered in \cite{Arendt} that every bounded linear operator preserving orthogonality between $C(K)$-spaces must be a weighted composition operator. K. Jarosz determined all (non-necessarily continuous) linear maps preserving orthogonality between $C(K)$-spaces in \cite{Jar90}. One of the results in the just quoted reference shows that every linear bijection preserving orthogonality between $C(K)$-spaces is automatically continuous and biorthogonality preserving. Jarosz' result gave rise to a fruitful research line on automatic continuity of bi-separating or biorthogonality preserving linear maps. For example, every
zero-products preserving linear bijection from a properly infinite von Neumann algebra into a unital ring is automatically continuous \cite[Theorem 4.2]{CheKeLeeWong03}. Each linear bijection which preserves zero-products in both directions between standard subalgebras of $B(X)$ spaces (where $X$ is a general Banach space) is automatically continuous and a multiple of an algebra isomorphism \cite[Theorem 1]{ArJar03}. J. Araujo and K. Jarosz conjectured, in the just quoted reference, that every linear bijection between C$^*$-algebras preserving zero-products in both directions must be automatically continuous. We can find several attempts to prove this conjecture in the literature. The most significant result in \cite{BurGarPe11} proves that every biorthogonality preserving linear surjection between two von Neumann algebras is automatically continuous. The same conclusion holds in other interesting cases, for example for the algebra of compact operators, but the problem remains open for general C$^*$-algebras. In \cite{BurGarPe11StudiaTriples} we find an extension of this result in the setting of weakly compact JB$^*$-triples and atomic JBW$^*$-triples.\smallskip

Let $N$ be a von Neumann algebra equipped with a normal semi-finite faithful trace $\tau$ and, for any $1\leq p\leq\infty$, let $L^p(N,\tau)$ denote the associated noncommutative $L^p(\tau)$-space. T. Oikhberg and the second author of this note proved in \cite{OikPe13} that if $N$ is a semi-finite factor not of type $I_2$, $1\leq p<\infty$, every linear transformation $T$ from $L^p (N,\tau)$ into a Banach space $X$ which is orthogonality-to-$p$-orthogonality preserving must be necessarily continuous, and if $T\neq 0$, then it is also invertible. It is further established that if $N$ is not a factor, but a separably acting von Neumann algebra, every bijective orthogonality-to-$p$-orthogonality preserving linear mapping $T:L^p(N,\tau)\to X$ is automatically continuous.\smallskip

More results on automatic continuity of bi-separating and biorthogonality preserving linear maps are pursued, for example, in
\cite{AlBreExVill09}, \cite{Bur13}, \cite{BurSanchOr}, \cite{LeuTsaiWong12}, \cite{LeuWong10}, \cite{LiuChouLiaoWong2018b}, 
\cite{TsaiWong10}, \cite{Wong2005}, \cite{Wong2007}. More recently, J.-H. Liu, C.-Y. Chou, C.-J. Liao and N.-C. Wong study bijective linear maps preserving either zero-products or range orthogonality  between AW$^*$-algebras \cite{LiuChouLiaoWong2018}. These maps are shown to be automatically continuous and related to algebra ($^*$-)isomorphisms.\smallskip

Continuous and symmetric linear operators from a unital C$^*$-algebra into another C$^*$-algebra were completely determined by M. Wolff in \cite{Wolff94}. M. Burgos, F.J. Fern{\'a}ndez-Polo, J. Mart{\'i}nez and the authors of this note showed in \cite{BurFerGarMarPe2008} how the study of bounded linear (non-necessarily symmetric) operators between general C$^*$-algebras can benefit from both the language and the techniques of JB$^*$-triple theory. The reader does not need to be familiarized with the theory of JB$^*$-triples in this paper, we simply observe that every C$^*$-algebra $A$ will be equipped with the triple product defined by $$\{a,b,c\} = \frac12 ( ab^* c + c b^* a).$$ A linear mapping between C$^*$-algebras is called a triple homomorphism if it preserves triple products of this form. Building upon the natural structure of JB$^*$-triple underlying each C$^*$-algebra, the description of all bounded linear maps preserving orthogonality between C$^*$-algebras can be summarized in the following theorem.

\begin{theorem}\label{thm characterization of OP}\cite[Theorem 17 and Corollary 18]{BurFerGarMarPe2008} Let $T: A\to B$ be a bounded linear operator between two C$^*$-algebras. For $h= T^{**} (1)$ and $r=r(h)$ the range partial isometry of $h$  (i.e., the partial isometry in the polar decomposition of $h$) in $B^{**}$ the following assertions are equivalent: \begin{enumerate}[{\rm $(a)$}] \item $T$ is orthogonality preserving;
\item There exists a triple homomorphism $S:
A \to B^{**}$ satisfying $h^* S(z) = S(z^*)^* h,$ $h S(z^*)^* = S(z) h^*$ (and consequently $r^* S(z)$ $= S(z^*)^* r$, $ S(z^*)^*$ $= S(z) r^*$) and \begin{equation}\label{eq fund equation conts OP thm 1} T(z) = \frac12 \left( h r^* S(z) + S(z) r^* h\right) = h r^* S(z)=S(z) r^* h,
\end{equation} for all $z\in A$;
\item $T$ preserves zero-triple-products, i.e. $$\{a,b,c\}=0 \Rightarrow \{T(a),T(b),T(c)\}=0.$$\end{enumerate}

\noindent It is further known that under any of the equivalent statements $T(A)$ and $S(A)$ are contained in $B_2^{**} (r) = r r^* B r^* r$. 
\end{theorem}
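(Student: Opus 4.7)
The directions (b) $\Rightarrow$ (c) and (c) $\Rightarrow$ (a) can be handled first. For (b) $\Rightarrow$ (c), substituting $T(z) = hr^*S(z)$ into $\{T(a),T(b),T(c)\}$ and using that $S$ is a triple homomorphism (together with $h, r \in B_2^{**}(r)$) makes the $hr^*$-factors pull out cleanly, so $\{a,b,c\} = 0$ forces $\{T(a),T(b),T(c)\} = 0$. For (c) $\Rightarrow$ (a), suppose $a \perp b$ in $A$, i.e.\ $ab^* = b^*a = 0$; a direct check gives $\{a,b,b\} = \{a,a,b\} = 0$. Applying (c), $\{T(a),T(b),T(b)\} = 0$; multiplying on the right by $T(a)^*$ and combining with the adjoint yields $[T(a)T(a)^*, T(b)T(b)^*] = 0$ together with
\[
T(a)T(b)^*T(b)T(a)^* + T(b)T(b)^*T(a)T(a)^* = 0,
\]
a sum of two positives (the second is a product of commuting positives). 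Each summand therefore vanishes, so $T(a)T(b)^* = 0$; the symmetric treatment of $\{T(a),T(a),T(b)\} = 0$ gives $T(b)^*T(a) = 0$, establishing $T(a) \perp T(b)$.

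The substantive implication is (a) $\Rightarrow$ (b). I would first extend $T$ to its weak*-continuous biadjoint $T^{**}: A^{**} \to B^{**}$: by a standard weak*-density argument using the equivalence (a) $\Leftrightarrow$ (c) just established together with separate weak*-continuity of the triple product, $T^{**}$ remains orthogonality preserving. Set $h := T^{**}(1)$ with polar decomposition $h = r|h|$ in $B^{**}$, where $r$ is the range partial isometry. For any projection $p \in A^{**}$, orthogonality of $T^{**}(p)$ and $T^{**}(1-p) = h - T^{**}(p)$ yields
\[
hh^* = T^{**}(p)T^{**}(p)^* + T^{**}(1-p)T^{**}(1-p)^*
\]
and the companion identity for $h^*h$, as sums of positives with disjoint supports bounded by $rr^*$ (resp.\ $r^*r$). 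Hence $T^{**}(p) \in B_2^{**}(r) := rr^*B^{**}r^*r$, and by weak*-density of spans of projections, $T^{**}(A^{**}) \subseteq B_2^{**}(r)$.

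To construct $S$, I would endow $B_2^{**}(r)$ with its canonical unital JB$^*$-algebra structure, in which the Jordan product is $x \circ_r y = \tfrac12(xr^*y + yr^*x)$ and the involution is $x \mapsto rx^*r$; here $h$ is positive, since $h = (r|h|^{1/2}) \circ_r (r|h|^{1/2})$. Using Borel functional calculus on $|h|$ inside $r^*r B^{**} r^*r$, form a generalised inverse $|h|^{[-1]}$ on the support projection of $|h|$ and set $S(z) := r|h|^{[-1]}r^*T^{**}(z)$ for $z \in A$. The orthogonality hypotheses force each $T^{**}(z)$ to be supported below the support projection of $|h|$, so $S(z)$ lies in $B_2^{**}(r)$, and $hr^*S(z) = r|h||h|^{[-1]}r^*T^{**}(z) = T(z)$. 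The companion factorisation $T(z) = S(z)r^*h$ and the symmetry identities $h^*S(z) = S(z^*)^*h$, $hS(z^*)^* = S(z)h^*$ then follow by applying orthogonality preservation to pairs $(z,z^*)$ and using the self-adjointness of $h$ in $B_2^{**}(r)$; that $S$ is a triple homomorphism is extracted by cancelling the $hr^*$-factors in the zero-triple-product identity for $T$. The principal obstacle is giving rigorous meaning to $|h|^{[-1]}$ despite possible non-invertibility of $|h|$ — this is where the orthogonality hypothesis is essential, as it forces each $T^{**}(z)$ to carry a factor of $|h|$ that the generalised inverse cancels, ensuring $S(z) \in B^{**}$.
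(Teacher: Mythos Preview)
This theorem is not proved in the present paper: it is quoted from \cite[Theorem~17 and Corollary~18]{BurFerGarMarPe2008} and used as a black box throughout, so there is no in-paper argument to compare your sketch against. Your easy directions are fine. The computation for $(b)\Rightarrow(c)$ is exactly what the paper later records as Lemma~\ref{l equation for multiplier}, and your argument for $(c)\Rightarrow(a)$ works once the steps are ordered correctly: from $T(a)T(b)^*T(b)T(a)^* + T(b)T(b)^*T(a)T(a)^*=0$ with the first summand positive, self-adjointness of the sum forces the second summand to be self-adjoint, hence $T(a)T(a)^*$ and $T(b)T(b)^*$ commute, the second summand is then positive, and both vanish.

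The hard direction $(a)\Rightarrow(b)$, however, has two genuine gaps. First, you extend orthogonality preservation to $T^{**}$ ``using the equivalence $(a)\Leftrightarrow(c)$ just established'', but you have only shown $(b)\Rightarrow(c)$ and $(c)\Rightarrow(a)$; the implication $(a)\Rightarrow(c)$ is precisely what remains, so this step is circular. Passing orthogonality preservation from $T$ to $T^{**}$ is itself non-trivial, since orthogonal pairs in $A^{**}$ are not weak$^*$-approximable by orthogonal pairs in $A$ in any naive way.

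Second, and more seriously, the definition $S(z):=r\,|h|^{[-1]}r^*T^{**}(z)$ requires $|h|^{[-1]}r^*T^{**}(z)\in B^{**}$, which can fail when $0\in\sigma(|h|)$. You flag this as ``the principal obstacle'' and then assert that orthogonality preservation ``forces each $T^{**}(z)$ to carry a factor of $|h|$ that the generalised inverse cancels''. But that assertion \emph{is} the content of the theorem: the inclusion $T^{**}(A^{**})\subseteq B_2^{**}(r)$, which you do obtain, says nothing about divisibility by $|h|$, and Douglas-type factorisations coming from $T^{**}(p)^*T^{**}(p)\le h^*h$ do not assemble linearly in $z$. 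The original proof in \cite{BurFerGarMarPe2008} does not attempt to invert $|h|$; the internal references in this paper (e.g.\ to ``$(19)$ in the proof of Theorem~17'' and the triple functional calculus $h^{[1/3^n]}\to r$ reviewed in \S\ref{subsec: notation and defs}) indicate that $S$ is built by a limiting procedure along triple powers of $h$, with the commutation identities between $h$, $r$ and $S$ derived in the course of the construction rather than read off from a formula involving a generalised inverse.
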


An extension of this result to the wider setting of orthogonality preserving bounded linear operators from a JB$^*$-algebra into a JB$^*$-triple was obtained in \cite{BurFerGarPe09}.\smallskip

Mathematicians have pursued results guaranteeing the automatic continuity of those linear bijections preserving orthogonality or zero-products in both directions between C$^*$-algebras. However, despite of the abundant literature on orthogonality preserving bounded linear operators, there is no an optimal description of those surjective or bijective bounded linear operators preserving  orthogonality. This paper is aimed to fill this gap, and to apply a concrete description of those maps to determine the uniformly continuous one-parameter semigroups of orthogonality preserving bounded linear operators on a general C$^*$-algebra.\smallskip

Section \ref{sec: surjective OP Cstar algebras} is devoted to throw new light on those bounded and orthogonality preserving linear operators between C$^*$-algebras which are also assumed to be surjective or bijective. By assuming the extra hypothesis of being surjective the conclusion in Theorem \ref{thm characterization of OP} can be sharpened. More concretely, suppose $T: A\to B$ is a bounded linear operator between C$^*$-algebras. Let $h= T^{**} (1)$ and let $r$ denote the range partial isometry of $h$ in $B^{**}$. In Theorem \ref{t Characterization bd OP plus surjective} we prove that the following statements are equivalent:\begin{enumerate}[$(a)$] \item $T$ is surjective and orthogonality preserving;
\item The elements $h$ and $r$ belong to the multipliers algebra of $B$ with $h$ invertible and $r$ unitary and there exists a surjective triple homomorphism $S$ from $A$ into $B$ satisfying $h^* S(x) = S(x^*)^* h$, $h S(x^*)^* = S(x) h^*$, $h r^* S(x) = S(x) r^* h,$ and $T(x) = h r^* S(x) = S(x) r^* h,$ for all $x\in A$;
\item The elements $h$ and $r$ belong to the multipliers algebra of $B$ with $h$ invertible and $r$ unitary and there exist another C$^*$-algebra structure on $B,$ $(B,\bullet_{r},*_{r}),$ and a surjective Jordan $^*$-homomorphism $S: A\to (B,\bullet_{r},*_{r})$ such that $h$ lies in the center of $(B^{**},\bullet_{r},*_{r})$ 
     and $$T(x) = h r^* S(x) = S(x) r^* h, \hbox{ for all } x\in A;$$
\item $\ker(T)$ is a norm closed ideal of $A$ and the quotient map $\widehat{T}: A/\ker(T)\to B$ is (continuous) surjective and orthogonality preserving;
\end{enumerate}

Corollary \ref{c Characterization bd OP plus bijective} is worth to be commented by itself, in this result we show that if $T: A\to B$ is a bijective bounded linear operator between C$^*$-algebras, 
then the following statements are equivalent:\begin{enumerate}[$(i)$] \item $T$ is orthogonality preserving (or it simply preserves orthogonality on $A_{sa}$, or on $A^{+}$);
\item $T$ is biorthogonality preserving (or it simply preserves orthogonality in both directions on $A_{sa}$, or on $A^{+}$);
\item $T$ preserves zero-triple-products, i.e. $$\{a,b,c\}=0 \Rightarrow \{T(a),T(b),T(c)\}=0;$$
\item $T$ preserves zero-triple-products in both directions, i.e. $\{a,b,c\}=0 \Leftrightarrow \{T(a),T(b),T(c)\}=0.$
\end{enumerate}

M. Wolff found in \cite[Theorem 2.6]{Wolff94} a detailed description of all uniformly continuous one-parameter semigroups of symmetric orthogonality preserving bounded operators on an arbitrary unital C$^*$-algebra (detailed definitions can be found in section \ref{sec: one-parameter semigroups Cstar algebra}). We shall establish different generalizations of Wolff's theorem in several directions in the third section of this paper.\smallskip

Our conclusion on uniformly continuous one-parameter semigroups of (non-necessarily symmetric) orthogonality preserving bounded operators (see Theorem \ref{t Wolff one-parameter for OP}) affirms that given a family $\{T_t: t\in \mathbb{R}_0^{+}\}$ of orthogonality preserving bounded linear bijections on a general C$^*$-algebra $A$ with $T_0=Id$, if for each $t\geq 0,$ we set $h_t = T_t^{**} (1)$ and we write $r_t$ for the range partial isometry of $h_t$ in $A^{**}$ and, by virtue of Corollary \ref{c Characterization bd OP plus bijective}, $S_t$ stands for the triple isomorphism on $A$ associated with $T_t$ satisfying $h_t^* S_t(x)$  $= S_t(x^*)^* h_t$, $h_t S_t(x^*)^* =$ $ S_t(x) h_t^*$, $h_t r_t^* S_t(x) =$  $S_t(x) r_t^* h_t$, and $$T_t(x) = h_t r_t^* S_t(x) = S_t(x) r_t^* h_t, \hbox{ for all } x\in A,$$ the following statements are equivalent:\begin{enumerate}[$(a)$]\item $\{T_t: t\in \mathbb{R}_0^{+}\}$ is a uniformly continuous  one-parameter semigroup of orthogonality preserving operators on $A$;
\item $\{S_t: t\in \mathbb{R}_0^{+}\}$ is a uniformly continuous one-parameter semigroup of surjective linear isometries (i.e. triple isomorphisms) on $A$ (and hence there exists a triple derivation $\delta$ on $A$ such that $S_t = e^{t \delta}$ for all $t\in \mathbb{R}$), the mapping $t\mapsto h_t $ is continuous at zero, and the identity $ h_{t+s} =  h_t r_t^* S_t^{**} (h_s),$ holds for all $s,t\in \mathbb{R}.$
\end{enumerate}

A generalization of Wolff's theorem \cite[Theorem 2.6]{Wolff94} for one-parameter semigroups of orthogonality preserving symmetric operators on non-unital C$^*$-algebras is established in Corollary \ref{c Wolff one-parameter for OP symmetric}, in this particular case the sets $\{r_t:t\in \mathbb{R}\}$ and  $\{h_t:t\in \mathbb{R}\}$ are shown to be one-parameter groups in the center of the multipliers algebra, $M(A),$ of $A$. However, we shall show in Remark \ref{R the ht are not a group in the general case} that in the general setting the sets $\{r_t:t\in \mathbb{R}^+_0\}$ and $\{h_t:t\in \mathbb{R}^+_0\}$ need not be one-parameter semigroups.
\smallskip

As we have previously commented, a description of all orthogonality preserving bounded\hyphenation{bound-ed} linear operator on a JB$^*$-algebra is obtained in \cite{BurFerGarPe09}. The question whether the results contained in this paper can be generalized to the setting of JB$^*$-algebras arises naturally. This question will be treated in the forthcoming paper \cite{GarPeUnitJBstaralg20} through new and independent techniques and arguments from the theory of JB$^*$-algebras and JB$^*$-triples.

\subsection{Notation and definitions}\label{subsec: notation and defs}

Throughout this note, given a C$^*$-algebra $A$ the symbols $A_{sa}$ and $A^{+}$ will denote the self-adjoint part and the positive cone of $A,$ respectively. Given $a\in A$, we denote by $L_a$, $R_a$, and $M_a$ the mappings given by $L_a(x) =  ax$, $R_a(x) = x a$, and $M_a = \frac12 (L_a + R_a)$, respectively.\smallskip

As we commented in the introduction, the study of (non-necessarily symmetric) bounded linear operators preserving orthogonality between general C$^*$-algebras is more accesible if each C$^*$-algebra $A$ is endowed with its natural triple product
$\J xyz := \frac{1}2 (xy^*z + zy^*x),$ ($x,y,z\in A$).\smallskip

Each C$^*$-algebra $A$ can be equipped with its natural Jordan product defined by $a\circ b=\frac{1}{2}(ab+ba)$. A \emph{Jordan homomorphism} between C$^*$-algebras $A$ and $B$ is a linear mapping $T:A\to B$ satisfying $T(a\circ b)=T(a)\circ T(b), $ for all $a,b\in A.$ A \emph{Jordan $^*$-homomorphism} is a {Jordan homomorphism} $T$ satisfying $T(a^*) = T(a)^*$ for all $a\in A$.  A linear mapping $T:A\to B$ is said to be a \emph{triple homomorphism} if  $T\{a,b,c\} = \{T(a),T(b), T(c)\}$, for all $a,b,c\in A$.\smallskip

A norm-closed linear subspace $ I$ of $A$ is said to be a \emph{subtriple} if $\J III\subseteq I$. Each partial isometry $e$ (i.e. $ee^* e= e$) in a C$^*$-algebra $A$ induces a \emph{Peirce decomposition} of $A$ as the direct sum of the subtriples
$$A_2 (e) = ee^* A e^* e,  \ A_1 (e) = ee^* A (1-e^*e) \oplus (1-ee^*) A e^*e,$$ $$\hbox{ and }  A_0 (e) = (1-ee^*) A (1-e^*e).$$ The Peirce projections of $A$ onto the Peirce subspaces are given by $P_2 (e) (x) = e e^* x e^*e$,  $P_1 (e) (x) = (1- e e^*) x e^*e + e e^* x (1-e^*e)$, and $P_0 (e) (x) = (1-e e^*) x (1-e^*e).$ The Peirce 2-subspace $A_2 (e)$ is a C$^*$-algebra with  product $x\bullet_e y :=  xe^*y$ and involution $x^{*_e} := \J exe = e x^* e$ (\cite{Zettl}). Thus the Jordan product in $A_2(e)$ is given by $x\circ_e y=\J xey = \frac12 (x\bullet_e y + y\bullet_e x)$.\smallskip

Let $a$ be an element in a C$^*$-algebra $A$. Suppose $a = u |a|$ is the polar decomposition of $a,$ where $|a|= (a^* a)^{\frac12}$ and $u$ is the unique partial isometry in $A^{**}$ such that $u^* u$ is the range projection of $|a|$ in $A^{**},$ which is also denoted by $r(|a|)$ (cf. \cite[\S 1.12 and Definition 1.10.3]{Sa}, \cite[Proposition 2.2.9]{Ped}). The partial isometry $u$ is known as the \emph{range partial isometry} of $a$ in $A^{**}$, and it is also denoted by $r(a)$.\smallskip

If we consider the ``odd triple'' powers of $a$ defined by $a^{[1]}= a$, $a^{[3]} = \J aaa$, and $a^{[2n+1]} := \J aa{a^{[2n-1]}},$ $(n\in \NN)$, for each odd polynomial (with zero constant term) $p(t)=\sum_{i=1}^n\lambda_i t^{2i-1}$, we set $p_t(a):=\sum_{i=1}^n\lambda_i a^{[2i-1]}$. Seeking for a connection with the continuous functional calculus, we see that $a^{[3]} = u |a| u^* u |a|^2 = u |a|^3,$ and by induction, $a^{[2n -1]} =  u |a|^{2n -1},$ for all $n\in \mathbb{N}$. Therefore $p_t(a) = u p(|a|)$ for every odd polynomial $p(\lambda)$ with zero constant term, where the right-hand-side to the C$^*$- continuous functional calculus of $p(\lambda)$ at $|a|$. A similar procedure was already employed by C.A. Akemann and G.K. Pedersen in \cite{AkPed77}. Namely, let $f$ be a continuous complex-valued function on the spectrum of $|a|$ which vanishes at zero. The element $f_t (a) := u f(|a|)$ lies in $A$ (cf. \cite[Lemma 2.1]{AkPed77}). In coherence with the usual notation in a wider setting, the element $f_t(a)$ will be called the \emph{continuous triple functional calculus} of $f$ at $a$. The triple functional calculus enjoys some additional properties, for example, every $a\in A$ admits a (unique) \emph{cubic root}, that is, a unique element $a^{[\frac{1}{3}]} = f_t (a) = u |a|^{\frac13},$ where $u$ is the range partial isometry of $a$ in $A^{**}$, such that $\{a^{[\frac{1}{3}]},a^{[\frac{1}{3}]},a^{[\frac{1}{3}]}\}=a.$ We can inductively define the sequence $( a^{[\frac{1}{3^n}]})_n $ by  $( a^{[\frac{1}{3^{n+1}}]})= \big( a^{[\frac{1}{3^n}]}\big)^{[\frac{1}{3}]}$. From the last paragraph we can assure that $a^{[\frac{1}{3^n}]} = u |a|^{\frac{1}{3^n}}$, for all natural $n$. Taking weak$^*$ limits in $A^{**}$ we get that the sequence $(a^{[\frac{1}{3^n}]})$ converges to the partial isometry  $ u r(|a|) = u = r(a)$.\smallskip

One of the characterizations of von Neumann algebras proves that a C$^*$-algebra is a von Neumann algebra precisely when it is a dual Banach space (cf.  \cite[Theorem 3.9.8]{Ped} or \cite[\S 1.1 and \S 1.20]{Sa} or \cite[Definition II.3.2, Theorem III.3.5]{Tak}). One of the most famous theorems due to S. Sakai affirms that the product of every von Neumann algebra is separately weak$^*$ continuous \cite[Theorem 1.7.8]{Sa}. Each von Neumann algebra admits a topology providing better properties for the product. Suppose $\varphi$ is a positive functional in the predual of a von Neumann algebra $M$. The mapping $x\mapsto \|x\|_{\varphi} =\varphi(xx^* + x^* x)^{\frac12}$ is a preHilbertian seminorm on $M$. The \emph{strong$^*$ topology} of $M$ is the topology generated by all the seminorms $\|\cdot\|_{\varphi}$ with $\varphi$ running in the set of positive normal functionals on $M$ (see \cite[Definition 1.8.7]{Sa}). The strong$^*$ topology is stronger than the weak$^*$ topology of $M,$ and the product of $M$ is jointly strong$^*$ continuous on bounded sets \cite[Theorem 1.8.9]{Sa}. A linear mapping $T$ between von Neumann algebras is weak$^*$-to-weak$^*$ continuous if and only if it is strong$^*$-to-strong$^*$ continuous (see \cite[Corollary 1.8.10]{Sa}). It should be noted that, for each $a$ in a C$^*$-algebra $A$, the sequence $(a^{[\frac{1}{3^n}]})$ also converges to $r(a)$ in the strong$^*$ topology of $A^{**}$.\smallskip

A \emph{two-sided ideal}, or simply an \emph{ideal}, of a C$^*$-algebra $A$ is a subspace $I$ satisfying $I A, A I \subseteq I$.  We say that a (norm closed) subspace $I$ of $A$ is a closed Jordan (respectively, triple) ideal of $A$ if $I\circ A \subseteq I$ (respectively, if $\{A,A,I\}\subseteq I$) holds. By \cite[Proposition 5.8]{Harris81} closed two-sided, Jordan and triple ideals in a C$^*$-algebra all coincide, it is further known that they are all self-adjoint.

\section{Surjective Orthogonality preserving operators on C$^*$-algebras}\label{sec: surjective OP Cstar algebras}

Let $T:A \to B$ be a (linear) mapping between C$^*$-algebras. We shall say that $T$ is orthogonality preserving on a set $\mathcal{S}\subseteq A$ if $a\perp b$ in $\mathcal{S}$ implies $T(a) \perp T(b)$.
 Theorem 20 in \cite{BurFerGarMarPe2008} shows that a bounded linear operator $T:A\to B$ between two C$^*$-algebras is a triple homomorphism if, and only if, $T$ preserves orthogonality on $A_{sa}$ and $T^{**}(1)$ is a partial isometry. It is therefore natural to ask whether in Theorem \ref{thm characterization of OP} we may only assume that $T$ preservers orthogonality on $A_{sa}.$ We show next that this is always the case.

\begin{proposition}\label{r OP on Asa} Let $T: A\to B$ be a bounded linear operator between C$^*$-algebras. The following statements are equivalent:
\begin{enumerate}[$(OP1)$]\item $T$ is orthogonality preserving;
\item $T$ is orthogonality preserving on $A_{sa}$;
\item $T$ is orthogonality preserving on $A^{+}$.
\end{enumerate}
\end{proposition}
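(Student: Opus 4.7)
The chain $(OP1) \Rightarrow (OP2) \Rightarrow (OP3)$ is immediate from the inclusions $A^{+}\subseteq A_{sa}\subseteq A$, so only the reverse implications require argument.

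To prove $(OP3)\Rightarrow(OP2)$, I would start from $a,b\in A_{sa}$ with $a\perp b$ (equivalently $ab=0$) and Jordan-decompose $a=a^+-a^-$ and $b=b^+-b^-$ via continuous functional calculus, exploiting the orthogonality of the range projections $r(a^+), r(a^-)$ (and of $r(b^+), r(b^-)$) inside $A^{**}$. Multiplying the identity $ab=0$ on the left by $r(a^\varepsilon)$ and on the right by $r(b^{\varepsilon'})$, and using $r(a^\varepsilon)a^\varepsilon = a^\varepsilon$ and $b^{\varepsilon'}r(b^{\varepsilon'})=b^{\varepsilon'}$, should extract $a^\varepsilon b^{\varepsilon'}=0$ for each of the four sign choices. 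Since each pair lies in $A^+$, hypothesis $(OP3)$ gives $T(a^\varepsilon)\perp T(b^{\varepsilon'})$ throughout, and expanding
$$T(a)T(b)^* = \sum_{\varepsilon,\varepsilon'\in\{+,-\}}\varepsilon\varepsilon'\, T(a^\varepsilon)T(b^{\varepsilon'})^*$$
(and analogously $T(b)^*T(a)$) will collapse to zero term by term.

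The hard step is $(OP2)\Rightarrow(OP1)$. The first temptation — reducing to self-adjoints via $x=x_1+ix_2$, $y=y_1+iy_2$ — fails because the real and imaginary parts of two orthogonal elements of a C$^*$-algebra need not themselves be orthogonal (already the pair $x=e_{12}$, $y=e_{21}$ in $M_2$ illustrates this). A second natural reduction uses the observation that, via a short polar-decomposition computation inside $A^{**}$, $x\perp y$ is equivalent to the two positive-element orthogonalities $xx^*\perp yy^*$ and $x^*x\perp y^*y$; however, this only delivers $T(xx^*)\perp T(yy^*)$ and $T(x^*x)\perp T(y^*y)$, and since $T$ is not multiplicative, we cannot recover the desired $T(x)T(x)^*\perp T(y)T(y)^*$ from this information.

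My plan for $(OP2)\Rightarrow(OP1)$ is therefore to route the argument through the structural description of Theorem~\ref{thm characterization of OP}. I would revisit the proof of the implication (a)$\Rightarrow$(b) in \cite[Theorem 17]{BurFerGarMarPe2008} and check that the construction of the triple homomorphism $S$ and of the weight $h=T^{**}(1)$ actually only invokes orthogonality preservation on pairs of orthogonal positive elements, namely on spectral projections of self-adjoint elements (or their strong$^*$-limits inside $A^{**}$). Granting this, the same construction runs under the weaker hypothesis $(OP2)$ and still produces the structural form (b); the direction (b)$\Rightarrow$(a) of Theorem~\ref{thm characterization of OP} then yields $(OP1)$. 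The principal obstacle is exactly this bookkeeping: verifying that no step of the cited proof requires orthogonality on non-self-adjoint pairs. Should some step resist, I would patch it by using the strong$^*$-convergence of the iterated odd triple powers $x^{[1/3^n]}=u|x|^{1/3^n}\in A$ (and $y^{[1/3^n]}\in A$) to the range partial isometries $r(x), r(y)$ inside $A^{**}$, combined with the weak$^*$-continuity of $T^{**}:A^{**}\to B^{**}$ and the joint strong$^*$-continuity of the Jordan/triple product on bounded sets, in order to transfer the self-adjoint orthogonality supplied by $(OP2)$ to the partial-isometry level and thence to $T(x)\perp T(y)$.
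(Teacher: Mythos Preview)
Your reduction $(OP3)\Rightarrow(OP2)$ via the four-piece positive decomposition of two orthogonal self-adjoints is correct and is exactly the first move the paper makes. The implication $(OP2)\Rightarrow(OP1)$ is where your proposal stalls: what you offer is a program (re-inspect the proof of \cite[Theorem~17]{BurFerGarMarPe2008} and verify that only self-adjoint orthogonality is ever used, then quote $(b)\Rightarrow(a)$ of Theorem~\ref{thm characterization of OP}), not a proof. You acknowledge this bookkeeping as the main obstacle, and the fallback you sketch---pushing self-adjoint orthogonality to the range partial isometries $r(x),r(y)$ via strong$^*$-limits of odd roots---does not obviously close the gap either, since it is unclear how information about $T$ on the self-adjoint elements $|x|^{1/3^n}$, $|y|^{1/3^n}$ would control $T(x^{[1/3^n]})=T(u|x|^{1/3^n})$.

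The paper avoids this entirely by taking a different and much shorter route: it proves $(OP3)\Rightarrow(OP1)$ in one stroke via Goldstein's structure theorem for orthogonal bilinear forms. For each $\phi\in B^*$ one sets $V_{1,\phi}(a,b)=\phi(T(a)T(b^*)^*)$ and $V_{2,\phi}(a,b)=\phi(T(b^*)^*T(a))$; your four-piece decomposition shows that these vanish on orthogonal self-adjoint pairs, i.e.\ they are orthogonal forms in the sense of \cite{Gold}. Goldstein's theorem \cite[Theorem~1.10]{Gold} then forces $V_{i,\phi}(x,y)=\tau_i(xy)+\rho_i(yx)$ for suitable $\tau_i,\rho_i\in A^*$ and \emph{all} $x,y\in A$. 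Since $x\perp y$ means $xy^*=y^*x=0$, one gets $V_{i,\phi}(x,y^*)=0$ for every $\phi$, and Hahn--Banach gives $T(x)\perp T(y)$. This is self-contained modulo Goldstein's result and does not require re-examining the proof of Theorem~\ref{thm characterization of OP}; in fact the logical order in the paper is the reverse of yours, with Proposition~\ref{r OP on Asa} established independently and then used downstream.
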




\begin{proof}
The implications $(OP1)\Rightarrow (OP2)\Rightarrow (OP3)$ are clear. Let us show that $(OP3)\Rightarrow (OP1)$. We fix an arbitrary $\phi\in B^{*}$ and define two bilinear forms on $A$ defined by $V_{1,\phi} (a,b) := \phi (T(a) T(b^*)^*)$ and $V_{2,\phi} (a,b) := \phi (T(b^*)^* T(a))$. Let us take $a,b\in A_{sa}$ with $a\perp b$. It is easy to see from Gelfand theory that we can find four mutually orthogonal positive elements $a_1,a_2,b_1,b_2\in A$ such that $a = a_1-a_2$ and $b=b_1-b_2$. By assumptions, $T(a_i) \perp T(b_j)$ for all $i,j=1,2$. Therefore, $T(a_i) T(b_j^*)^* = T(b_j^*)^*  T(a_i) =0$ for all $i,j\in \{1,2\}$, and thus $V_{1,\phi} (a,b) = V_{2,\phi} (a,b)=0$, witnessing that $V_{1,\phi}$ and $V_{2,\phi}$ are orthogonal forms in Goldstein's sense \cite{Gold}. By Theorem 1.10 in \cite{Gold} there exist $\tau_1,\tau_2,\rho_1,\rho_2 \in A^*$ such that
$$
V_{1,\phi}(x,y)=\tau_1(xy)+\tau_2(y x) \mbox{ and } V_{2,\phi}(x,y)=\rho_1(xy)+\rho_2(y x),\hbox{ for all $x,y\in A.$}
$$
 Obviously $\phi (T(x) T(y)^*) = V_{1,\phi}(x,y^*) = 0 = V_{2,\phi}(x,y^*) =\phi (T(y)^* T(x))$ for all $x,y\in A$ with $x\perp y.$ The Hahn-Banach theorem implies that $T(x)\perp T(y)$ for all $x\perp y$ in $A$. \smallskip
\end{proof}

The properties of the kernel of an orthogonality preserving bounded linear operator between C$^*$-algebras are studied in the next lemma.

\begin{lemma}\label{l kernel is an ideal} Let $T : A\to B$ be an orthogonality preserving bounded linear operator between two C$^*$-algebras. Let $h=T^{**} (1)\in B^{**}$, $r$ the range partial isometry of $h$ in $B^{**}$, and let $S: A \to B^{**}$ be the triple homomorphism given by Theorem \ref{thm characterization of OP}. Then $\ker(T) =\ker (S)$ is a norm closed ideal of $A$.
\end{lemma}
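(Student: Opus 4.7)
The strategy splits the statement into two parts: first, establishing the equality $\ker(T) = \ker(S)$; second, verifying that this common kernel is a norm-closed two-sided ideal of $A$.

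For the first part, the inclusion $\ker(S)\subseteq \ker(T)$ is immediate from the identity $T(x) = hr^*S(x)$ supplied by Theorem \ref{thm characterization of OP}. To see the reverse inclusion, I would start from $hr^*S(x)=0$ and exploit the polar decomposition $h = r|h|$ in $B^{**}$, where by definition $r^*r$ is the range (equivalently, support) projection of $|h|$, so that $r^*h = r^*r\,|h| = |h|$. Multiplying $hr^*S(x)=0$ on the left by $r^*$ therefore yields $|h|\,r^*S(x)=0$. Setting $z:=r^*S(x)$ and using $S(x)\in B_2^{**}(r)=rr^* B^{**} r^*r$, one checks directly that $z\in r^*r B^{**} r^*r$, in particular $r^*r\cdot z = z$. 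The relation $|h|z=0$ propagates through the continuous functional calculus to $|h|^{1/n}z=0$ for every $n\geq 1$, and taking the strong$^*$ limit as $n\to\infty$ (using $|h|^{1/n}\to r^*r$) gives $(r^*r)z=0$, hence $z=0$. Finally, multiplying $r^*S(x)=0$ on the left by $r$ and using $rr^*S(x)=S(x)$ forces $S(x)=0$.

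For the second part, norm-closedness of $\ker(T)$ follows from the continuity of $T$. To identify this kernel with a two-sided ideal, I would invoke the triple homomorphism property of $S$: for any $a\in \ker(S)$ and $b,c\in A$, $S\{a,b,c\}=\{S(a),S(b),S(c)\}=0$, and similarly when $a$ occupies any slot of the triple product, so $\ker(S)$ is a (closed) triple ideal of $A$. The result \cite[Proposition 5.8]{Harris81} recalled at the end of the Notation subsection then identifies closed triple ideals of a C$^*$-algebra with ordinary closed two-sided ideals, finishing the proof.

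The main technical obstacle is the left-cancellation step: even though $h$ need not be invertible in $B^{**}$, its modulus $|h|$ has full support projection $r^*r$ within the Peirce $2$ corner $r^*r B^{**} r^*r$, and that full-support property — together with the Peirce $2$ absorption $rr^*S(x)=S(x)$ — is precisely what permits one to cancel $hr^*$ and recover $S(x)$ from the single equation $hr^*S(x)=0$.
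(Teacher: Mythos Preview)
Your proof is correct and follows essentially the same strategy as the paper's: both arguments cancel the factor $hr^*$ by approximating the support projection $r^*r$ of $|h|$ through functional calculus and then invoke the Peirce-$2$ containment $S(A)\subseteq B_2^{**}(r)$, with the ideal conclusion drawn from \cite[Proposition~5.8]{Harris81}. The only cosmetic difference is that the paper iterates odd triple powers $h^{[2n-1]}=r|h|^{2n-1}$ and passes to a weak$^*$ limit, whereas you first left-multiply by $r^*$ to reduce to $|h|z=0$ and then use the roots $|h|^{1/n}\to r^*r$ in the strong$^*$ topology; both routes are standard and equally valid.
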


\begin{proof} It is almost explicit in the proof of \cite[Theorem 17 and Corollary 18]{BurFerGarMarPe2008} that $S(A)\subseteq B^{**}_2(r) = rr^* B^{**} r^* r$ and $S^{**} (1) = r$ (see also \cite[Theorem 4.1]{BurFerGarPe09}, where a more explicit argument is given in the wider setting of JB$^*$-algebras).\smallskip

Obviously $\ker (S) \subseteq \ker (T)$ (cf. Theorem \ref{thm characterization of OP}). Take now $x\in \ker (T)$. That is, $0 = T(x) = h r^* S(x)$. It then follows that $$h^{[3]} r^* S(x) = hh^*h r^* S(x)= 0,$$ and by induction $r |h|^{2n-1} r^* S(x)= h^{[2n-1]} r^* S(x) =  0,$ for all natural $n$. Consequently $r z r^* S(x) =0$ for all $z$ in the C$^*$-subalgebra of $B^{**}$ generated by $|h|$. Since $r^* r = r(|h|)$ lies in the weak$^*$ closure of the C$^*$-subalgebra of $B^{**}$ generated by $|h|$, and the product of $B^{**}$ is separately weak$^*$ continuous (cf. \cite[Theorem 1.7.8]{Sa}), we can easily deduce that $0= r r^* S(x),$ which gives $S(x)=0$ because $S(A)\subseteq B^{**}_2(r) = rr^* B^{**} r^* r$. We have therefore proved that $\ker (T) = \ker (S)$ is a norm closed triple ideal of $A$. Thus, $ker(T)$ is a closed (self-adjoint) ideal of $A$ (cf. \cite[Corollary 5.8]{Harris81}).
\end{proof}

We are now in a position to reveal the nature of all surjective bounded linear operators preserving orthogonality.

\begin{proposition}\label{p surjective OP operators} Let $T : A\to B$ be a continuous and surjective linear operator preserving orthogonality between two C$^*$-algebras. Let $h=T^{**} (1)\in B^{**}$ and let $r$ denote the range partial isometry of $h$ in $B^{**}$. Then the following statements hold:
\begin{enumerate}[$(a)$] \item $r$ is a unitary element in $B^{**}$;
\item $\ker(T)$ is a norm closed (self-adjoint) ideal of $A$;
\item If $T$ is bijective the element $h$ is invertible in $B^{**}$;
\item The quotient mapping $\widehat{T}: A/\ker(T)\to B,$ $\widehat{T}(x+\ker(T)) = T(x)$ is a bijective orthogonality preserving bounded linear mapping;
\item $h$ is invertible in $B^{**}$,
\item There exist a triple homomorphism $S: A\to B^{**}$ and a triple monomorphism $\widehat{S}: A/\ker(S)\to B^{**}$ satisfying \begin{enumerate}[$(1)$] \item $\ker(T)= \ker(S)$;
    \item $\widehat{S}^{**} (1+\overline{\ker(S)}^{w^*}) =S^{**} (1) = r;$
    \item $\widehat{S} (x+{\ker(S)}) =S(x)$;
    \item $h^* S(x) = S(x^*)^* h$, $h S(x^*)^* = S(x) h^*$ (and hence $r^* S(x) = S(x^*)^* r$, $r S(x^*)^* = S(x) r^*$), $h r^* S(x) = S(x) r^* h$;
    \item $\widehat{T}(x+\ker(T)) =T(x) = h r^* S(x) = S(x) r^* h,$ for all $x\in A$.
\end{enumerate}
\end{enumerate}
\end{proposition}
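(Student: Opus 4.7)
My strategy begins with Theorem~\ref{thm characterization of OP}: applied to $T$ it supplies a triple homomorphism $S:A\to B^{**}$ with $T(A), S(A)\subseteq B_2^{**}(r)=rr^*B^{**}r^*r$, $S^{**}(1)=r$, and the identities $h^*S(x)=S(x^*)^*h$, $hS(x^*)^*=S(x)h^*$, $hr^*S(x)=S(x)r^*h$, together with $T(x)=hr^*S(x)=S(x)r^*h$. Part (b) is then Lemma~\ref{l kernel is an ideal} verbatim, which in addition yields the induced triple monomorphism $\widehat{S}:A/\ker(S)\to B^{**}$ via $\widehat{S}(x+\ker(S))=S(x)$.

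For (a) I exploit surjectivity: $B=T(A)\subseteq B_2^{**}(r)$, and since Peirce $2$-subspaces are weak$^*$-closed, $B^{**}=\overline{B}^{w^*}\subseteq B_2^{**}(r)$, so $B^{**}=rr^*B^{**}r^*r$. Writing $1=rr^*\, y\, r^*r$ for some $y\in B^{**}$ and multiplying on the left by the projection $rr^*$ forces $rr^*=1$; multiplying on the right by the projection $r^*r$ forces $r^*r=1$. Hence $r$ is unitary in $B^{**}$.

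For (e) (of which (c) is the bijective instance) I pass to biduals. Surjectivity of $T$ gives surjectivity of $T^{**}:A^{**}\to B^{**}$, and weak$^*$-continuity extends the factorization $T=L_{hr^*}\circ S$ to $T^{**}=L_{hr^*}\circ S^{**}$ (with $S^{**}$ interpreted as taking values in $B^{**}$). Consequently $B^{**}=T^{**}(A^{**})=hr^*S^{**}(A^{**})\subseteq hr^*B^{**}$, so $hr^*B^{**}=B^{**}$, and there exists $y\in B^{**}$ with $hr^*y=1$. The element $hr^*=r|h|r^*=(r|h|^{1/2})(r|h|^{1/2})^*$ is positive, hence self-adjoint, and taking adjoints in $hr^*y=1$ also gives $y^*hr^*=1$; so $hr^*$ is two-sidedly invertible. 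Since $r$ is unitary, $h=(hr^*)r$ is invertible in $B^{**}$ as well.

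For (d) and (f), part (b) makes $A/\ker(T)$ into a C$^*$-algebra and $\widehat{T}(x+\ker(T)):=T(x)$ into a continuous bijection; the main obstacle is to verify that $\widehat{T}$ preserves orthogonality, since orthogonal pairs in $A/\ker(T)$ need not admit orthogonal lifts to $A$. I circumvent this algebraically. From $r^*h=|h|$ one has $T(x)=S(x)|h|$, and combining this with $hr^*S(x)=S(x)r^*h$ yields the commutation $|h|\, r^*S(x)=r^*S(x)\,|h|$; iterating, $|h|^2$ commutes with every $r^*S(x)$. Given $\bar a\perp\bar b$ in $A/\ker(T)$, the triple homomorphism $\widehat{S}$ delivers $\widehat{S}(\bar a)\perp\widehat{S}(\bar b)$ in $B^{**}$ (triple and C$^*$ orthogonality coincide in a C$^*$-algebra), so $S(a)S(b)^*=S(b)^*S(a)=0$ for any lifts $a,b$, whence
\[
T(a)T(b)^*=S(a)|h|^2S(b)^*=r\,(r^*S(a))\,|h|^2\, S(b)^*=r\,|h|^2(r^*S(a))\, S(b)^*=r|h|^2r^*\bigl(S(a)S(b)^*\bigr)=0,
\]
and $T(b)^*T(a)=|h|S(b)^*S(a)|h|=0$; thus $\widehat{T}(\bar a)\perp\widehat{T}(\bar b)$. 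The remaining clauses of (f) are now bookkeeping: (1) is Lemma~\ref{l kernel is an ideal}; (2) follows by taking biduals in $\widehat{S}\circ\pi=S$, using $\pi^{**}(1)=1+\overline{\ker(S)}^{w^*}$ and $S^{**}(1)=r$; (3) and (5) are the definitions of $\widehat{S}$ and $\widehat{T}$ combined with Theorem~\ref{thm characterization of OP}; and (4) is that theorem directly.
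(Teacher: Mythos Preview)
Your proof is correct, and parts $(a)$, $(b)$, $(f)$ match the paper's argument. Parts $(d)$ and $(e)$, however, take a genuinely different route. For $(e)$, the paper first establishes $(c)$ under the extra hypothesis that $T$ is bijective (so that $T^{**}$ is an isomorphism, furnishing $z\in A^{**}$ with $T^{**}(z)=1$ and hence $1=hr^*c=cr^*h$ for a suitable weak$^*$-cluster point $c$ of $(S(a_\lambda))$); it then derives $(e)$ by applying $(c)$ to the bijective quotient map $\widehat{T}$ obtained in $(d)$. You instead observe that surjectivity of $T$ already forces surjectivity of $T^{**}$, so $1\in hr^*B^{**}$ follows directly, and the self-adjointness of $hr^*=r|h|r^*$ converts a one-sided inverse into a two-sided one---this bypasses the detour through the quotient entirely. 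For $(d)$, the paper invokes the Akemann--Pedersen lifting result \cite[Corollary 2.4]{AkPed77} to find orthogonal lifts in $A$ of orthogonal self-adjoint elements in $A/\ker(T)$, and then appeals to Proposition~\ref{r OP on Asa} to pass from $A_{sa}$ to all of $A$. You avoid both steps by exploiting that the triple monomorphism $\widehat{S}$ is automatically orthogonality preserving (an instance of Theorem~\ref{thm characterization of OP}$(c)\Rightarrow(a)$), together with the commutation $|h|\,r^*S(x)=r^*S(x)\,|h|$ derived from the structural identities; this reduces $T(a)T(b)^*$ and $T(b)^*T(a)$ to expressions with a factor $S(a)S(b)^*$ or $S(b)^*S(a)$, which vanish. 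Your approach is more self-contained and shows that the logical chain $(c)\to(d)\to(e)$ in the paper can be flattened; the paper's route, on the other hand, keeps the lifting lemma available as an independent tool and makes the role of the quotient map more explicit. One minor point: your phrase ``$S^{**}$ interpreted as taking values in $B^{**}$'' glosses over the fact that $S$ maps into $B^{**}$ and hence $S^{**}$ into $B^{****}$; the rigorous statement is simply $T^{**}(A^{**})\subseteq hr^*B^{**}$, obtained via bounded nets and weak$^*$-cluster points exactly as in the paper's proof of $(c)$, so the conclusion stands.
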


\begin{proof}$(a)$ We know from \cite[$(19)$ in the proof of Theorem 17]{BurFerGarMarPe2008} that $T(A)\subseteq B^{**}_2 (r) = rr^* B^{**} r^* r$. Since, by hypothesis $T$ is surjective, we deduce that $B \subseteq B^{**}_2 (r) = rr^* B^{**} r^*r$. Having in mind that, by Goldstine's theorem, $B$ is weak$^*$ dense in $B^{**}$ and $B^{**}_2 (r)$ is weak$^*$-closed, it follows that $B^{**} = B^{**}_2 (r)$ which proves that $r$ is a unitary in $B^{**}$.\smallskip

$(b)$ The desired conclusion follows from Lemma \ref{l kernel is an ideal} even under weaker hypotheses.\smallskip

$(c)$ Suppose $T$ is bijective and hence an isomorphism. Let $S: A\to B^{**}$ be the triple homomorphism given by Theorem \ref{thm characterization of OP}. The bitranspose $T^{**} : A^{**}\to B^{**}$ must be an isomorphism too, and consequently there exists $z\in A^{**}$ satisfying $1 = T^{**} (z).$  The element $z$ can be achieved as the weak$^*$ limit of a bounded net $(a_{\lambda}) $ in $A$. Since, for example, by \cite[Proposition 3.4]{Harris81}, $S$ is contractive the net $(S(a_{\lambda}))$ is bounded. By Banach-Alouglu's theorem  $(S(a_{\lambda}))$ admits a subnet  $(S(a_{\mu}))$ which is convergent in the weak$^*$ topology of $B^{**}$ to some $c\in B^{**}.$ Let us note that the subnet $(T(a_{\mu}))$ converges to $T^{**} (z) =1$ in the weak$^*$ topology of $B^{**}$. It follows from the weak$^*$ continuity of $T^{**}$, the separate weak$^*$ continuity of the product of $B^{**}$ and the identity in \eqref{eq fund equation conts OP thm 1} that $$1 = T^{**} (z) = h r^* c = c r^* h,$$ witnessing that $h$ is invertible in $B^{**}$.\smallskip

$(d)$ The mapping $\widehat{T}: A/\ker(T)\to B,$ $\widehat{T}(x+\ker(T)) = T(x)$ is a well-defined bounded linear bijection. Suppose that $x+\ker(T), y +\ker(T)$ are two self-adjoint elements in the quotient $A/\ker(T)$ with $x+ker(T)\perp y+ker(T)$. We can assume that $x,y\in A_{sa}$. Corollary 2.4 in \cite{AkPed77} implies the existence of $a,b\in \ker(T)$ such that $(x-a)\perp (y-b)$. By hypothesis $\widehat{T}(x+\ker(T)) = T(x-a)\perp T(y-b) = \widehat{T}(y+\ker(T))$, which proves that $\widehat{T}$ is orthogonality preserving on $(A/\ker(T))_{sa}$. We deduce from Proposition \ref{r OP on Asa} that $\widehat{T}$ is orthogonality preserving. \smallskip

$(e)$ We have seen in the proof of $(b)$ that $\ker(T) = \ker(S)$. We know from $(d)$ that $\widehat{T}: A/\ker(T)\to B,$ $\widehat{T}(x+\ker(T)) = T(x)$ is a bounded linear bijection preserving orthogonality. Now, applying $(c)$ it follows that $h= T^{**} (1) = \widehat{T}^{**} (1+\overline{\ker(T)}^{w^*})$ is invertible in $B^{**}$.\smallskip

Statement $(f)$ is clear from the previous ones.
\end{proof}

Let us observe that the conclusion in Proposition \ref{p surjective OP operators}$(f)$ is very close to be a characterization of surjective orthogonality preserving linear operators between C$^*$-algebras. To get a more precise statement we need to work a bit more. The next lemma is straight consequence of \cite[Theorem 17 and Corollary 18]{BurFerGarMarPe2008} (see Theorem \ref{thm characterization of OP}).\smallskip

\begin{lemma}\label{l equation for multiplier} Let $A$ and $B$ be C$^*$-algebras and let $T:A\to B$ be a bounded linear operator preserving orthogonality. Then the identity \begin{equation}\label{eq identity multiplier} \{T(a), T(b), T(c)\} = h^{[3]} r^* S(\{a,b,c\}),
\end{equation} holds for all $a,b,c$ in $A$.
\end{lemma}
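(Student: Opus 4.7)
The plan is to transfer the computation to the Peirce-2 subspace $B^{**}_2(r) = rr^* B^{**} r^* r$, which (following Zettl) is a C$^*$-algebra under the product $x\bullet_r y = x r^* y$ and involution $x^{*_r} = r x^* r$, and whose induced triple product agrees with the triple product of $B^{**}$ restricted to this subspace. From Theorem \ref{thm characterization of OP} we already know that $S(A)\subseteq B^{**}_2(r)$, with $S^{**}(1)=r$, so the identity $r^*S(z)=S(z^*)^* r$ rewrites as $S(z^*)=S(z)^{*_r}$; that is, $S$ is a Jordan $^*$-homomorphism into $(B^{**}_2(r),\bullet_r,*_r)$. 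The element $h=r|h|$ also lies in $B^{**}_2(r)$, because $r^*r$ is the support projection of $|h|$, yielding $rr^*h=h=hr^*r$. With this language, the identities in Theorem \ref{thm characterization of OP} read $T(x) = h\bullet_r S(x) = S(x)\bullet_r h$, so $h$ (and, by taking $*_r$-adjoints, $h^{*_r}$) commutes with every element of $S(A)$ inside $(B^{**}_2(r),\bullet_r,*_r)$.

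With these preparations, I would compute, for arbitrary $a,b,c\in A$,
\[
\{T(a),T(b),T(c)\} = \tfrac{1}{2}\bigl( T(a)\bullet_r T(b)^{*_r} \bullet_r T(c) + T(c)\bullet_r T(b)^{*_r} \bullet_r T(a)\bigr).
\]
Substituting $T(x)=h\bullet_r S(x)$ and $T(b)^{*_r}=S(b)^{*_r}\bullet_r h^{*_r}=S(b^*)\bullet_r h^{*_r}$, and using the commutation of $h$ and $h^{*_r}$ with every $S(z)$, I can slide all three $h$-factors to the left to obtain
\[
T(a)\bullet_r T(b)^{*_r}\bullet_r T(c) = \bigl(h\bullet_r h^{*_r}\bullet_r h\bigr) \bullet_r \bigl(S(a)\bullet_r S(b^*)\bullet_r S(c)\bigr).
\]
A direct check using $rr^*h=h=hr^*r$ gives $h\bullet_r h^{*_r}\bullet_r h = hh^*h = h^{[3]}$, and the Peirce-2 containment of $S(A)$ yields $S(a)\bullet_r S(b^*)\bullet_r S(c) = S(a)S(b)^*S(c)$; consequently each summand collapses to $h^{[3]} r^* S(a)S(b)^*S(c)$ (respectively $h^{[3]} r^* S(c)S(b)^*S(a)$).

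Adding the two summands, dividing by two, and invoking that $S$ is a triple homomorphism,
\[
S(\{a,b,c\}) = \{S(a),S(b),S(c)\} = \tfrac{1}{2}\bigl(S(a)S(b)^*S(c) + S(c)S(b)^*S(a)\bigr),
\]
delivers $\{T(a),T(b),T(c)\} = h^{[3]}\,r^*\,S(\{a,b,c\})$, as required. The only delicate point is the bookkeeping between the original involution $^*$ and the Peirce-2 involution $*_r$; once the identification $S(z^*)=S(z)^{*_r}$ is read off from Theorem \ref{thm characterization of OP}, the remaining manipulations are routine, in accordance with the author's remark that the lemma is a direct consequence of the characterization theorem.
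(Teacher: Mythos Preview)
Your proof is correct and follows essentially the same route as the paper: both arguments expand $\{T(a),T(b),T(c)\}$ via $T=h r^* S$ and use the commutation relations from Theorem~\ref{thm characterization of OP} to slide the three $h$-factors to the left, collapsing them into $h^{[3]}$. Your version packages the computation inside the C$^*$-algebra $(B^{**}_2(r),\bullet_r,*_r)$, which makes the ``$h$ is central'' step more transparent, whereas the paper performs the same manipulations bare-handed in the original product of $B^{**}$ (first for $c=a$, then polarizing); the underlying identities are identical.
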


\begin{proof} Theorem \ref{thm characterization of OP} \cite[Theorem 17 and Corollary 18]{BurFerGarMarPe2008} guarantees the existence of a triple homomorphism $S:A\to B^{**}$ satisfying $T(A),S(A)\subseteq B_2^{**}(r)= rr^* B^{**} r^* r$, $h^* S(z) = S(z^*)^* h,$ $h S(z^*)^* = S(z) h^*$ (and hence $r^* S(z) = S(z^*)^* r$, $r S(z^*)^* = S(z) r^*$) and $T(z) = h r^* S(z)=S(z) r^* h,$ for all $z\in A$. Therefore $$\begin{aligned}\{ T(a), T(b), T(a)\} &= h r^* S(a) S(b)^* r h^* h r^* S(a) = h r^* S(a) h^* r S(b)^* h r^* S(a) \\
&= h r^* h S(a^*)^* r h^* S(b^*) r^* S(a) = h r^* h r^* S(a) h^* r S(b)^* S(a) \\
&= h r^* h r^* h r^* S(a) S(b)^* S(a) = h h^* r r^* h r^* \{S(a), S(b), S(a)\} \\
&= \{h,h,h\} r^* S(\{a,b,a\}) = h^{[3]} r^* S(\{a,b,a\}),
\end{aligned}$$ where we employed that $r^* h = h^* r$ and $r r^* h =h $ because $r$ is the range partial isometry of $h$ in $B^{**}$. The desired identity follows from the symmetry of the triple product in the outer variables.
\end{proof}

We recall that the \emph{multipliers algebra}, $M(A),$ of a C$^*$-algebra $A$, is the C$^*$-subalgebra of $A^{**}$ of all
elements $x\in A^{**}$ such that $x a, a x \in A$ for all $a\in A$ (cf. \cite[Definition III.6.22]{Tak}). If $A$ is unital this extension gives no additional information as $M(A) =A$. 
\smallskip

In \cite{BuChu92}  L. Bunce and Ch.-H. Chu extend the concept of multipliers to the more general category of JB$^*$-triples. Among the consequences of their results it is shown that the multipliers algebra of a C$^*$-algebra $A$ can be characterised in terms of its triple product. More concretely, if $A$ is endowed with its natural triple product we have  $$M(A) =\{ x\in A^{**} : \{x,A,A\}\subseteq A\}  \hbox{\ \  (cf. \cite[page 253]{BuChu92}).} $$ The result is explicitly proved in \cite{BuChu92}, however the equality can be checked without any reference to JB$^*$-triple theory.

\begin{proposition}\label{p surjective OP operators connected to multipliers} Let $T : A\to B$ be a surjective bounded linear operator preserving orthogonality between two C$^*$-algebras. Let $h=T^{**} (1)\in B^{**}$ and let $r$ denote the range partial isometry of $h$ in $B^{**}$. Let $S: A\to B^{**}$ be the triple homomorphism given by Theorem \ref{thm characterization of OP}. Then the following statements hold:
\begin{enumerate}[$(a)$] \item The elements $h$ and $r$ belong to $M(B)$;
\item The triple homomorphism $S$ is $B$-valued and surjective;
\item $T^{**}(M(A))\subseteq M(B)$ and $S^{**}(M(A))\subseteq M(B)$.
\end{enumerate}
\end{proposition}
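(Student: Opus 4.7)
The plan is to prove the three assertions in the order: first $hh^{*}\in M(B)$; then $(b)$; then the remainder of $(a)$; and finally $(c)$. The main obstacle is a chicken-and-egg between ``$S$ is $B$-valued'' and ``$h, r\in M(B)$'', which I will break by first analysing the auxiliary positive element $hh^{*}$. Specialising Lemma \ref{l equation for multiplier} to $b=c=a$ and rewriting $h^{[3]}r^{*}$ as $(hh^{*})(hr^{*})$ (using $h = r|h|$ and $r$ unitary in $B^{**}$) gives the key identity
$$T(a)^{[3]} \;=\; hh^{*}\, T(a^{[3]}), \qquad a \in A. $$
Next I will use that the triple-cube map is surjective on $A$: for any $b \in A$ with polar decomposition $b = u|b|$, the element $b^{[1/3]} := u|b|^{1/3}$ lies in $A$ by the continuous triple functional calculus, and a direct check gives $(b^{[1/3]})^{[3]} = b$. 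Consequently, given any $b \in B = T(A)$ I can write $b = T(a^{[3]})$ and deduce $hh^{*} b = T(a)^{[3]} \in B$; since $hh^{*}$ is self-adjoint, this gives $hh^{*} \in M(B)$.

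For $(b)$ I will rearrange the same lemma to $S(\{a,b,c\}) = (h^{[3]}r^{*})^{-1}\{T(a),T(b),T(c)\} = (hh^{*})^{-3/2}\{T(a),T(b),T(c)\}$ (using $h^{[3]}r^{*} = (hh^{*})^{3/2}$). Applying the identities $\{A,A,A\} = A$ and $\{B,B,B\} = B$ then yields $S(A) = (hh^{*})^{-3/2} B$. Since $hh^{*}\in M(B)$ is invertible in $B^{**}$ (by Proposition \ref{p surjective OP operators}$(e)$) and $M(B)$ is a unital C$^{*}$-subalgebra of $B^{**}$ sharing its unit, the spectra agree, so $(hh^{*})^{-3/2}\in M(B)$ by continuous functional calculus; hence $S(A)\subseteq B$, and invertibility of $(hh^{*})^{-3/2}$ in $M(B)$ forces $S(A) = B$. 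For the remainder of $(a)$, I will fix an approximate identity $(u_{\lambda})$ of $A$. Then $u_{\lambda}\to 1$ strongly$^{*}$ in $A^{**}$, and the strong$^{*}$-to-strong$^{*}$ continuity of $S^{**}$ gives $S(u_{\lambda})\to r$ strongly$^{*}$. For fixed $a,b \in A$ the norm convergence $\{u_{\lambda}, a, b\}\to \tfrac12(a^{*}b + ba^{*})$ transfers through $S$ to a norm limit, while the triple-homomorphism identity together with joint strong$^{*}$ continuity of products on bounded sets gives $\{S(u_{\lambda}), S(a), S(b)\}\to\{r, S(a), S(b)\}$ strongly$^{*}$. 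Comparing the two limits (norm implies strong$^{*}$) shows $\{r, S(a), S(b)\}\in S(A) = B$. Since $S$ is surjective, $\{r, B, B\}\subseteq B$, hence $r\in M(B)$ via the triple-product characterisation of the multiplier algebra recalled just before the proposition. Finally $|h|^{2} = r^{*}(hh^{*}) r \in M(B)$, and continuous functional calculus yields $|h|\in M(B)$, so $h = r|h|\in M(B)$.

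For $(c)$, the weak$^{*}$-continuous extension of $T(z) = hr^{*}S(z)$ produces $T^{**}(x) = hr^{*} S^{**}(x)$ for every $x\in A^{**}$; using $h, r \in M(B)$ from $(a)$, it suffices to check $S^{**}(M(A)) \subseteq M(B)$. Given $x \in M(A)$ and $b = S(a), c = S(a') \in B$, weak$^{*}$-continuity and the triple-homomorphism property of $S^{**}$ give $\{S^{**}(x), b, c\} = S^{**}(\{x, a, a'\}) = S(\{x, a, a'\})$, which lies in $S(A) = B$ because $\{x, a, a'\} \in A$ by the very characterisation $M(A) = \{y \in A^{**} : \{y, A, A\} \subseteq A\}$. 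This establishes both inclusions in $(c)$ and completes the plan.
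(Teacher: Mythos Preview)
Your argument is correct, and it proceeds along a genuinely different route from the paper's. Both proofs lean on Lemma~\ref{l equation for multiplier} together with cubic roots, an approximate unit, and the triple characterisation $M(B)=\{x\in B^{**}:\{x,B,B\}\subseteq B\}$, but the order in which the pieces are assembled is reversed. The paper first inserts an approximate unit into $\{T(a),T(b),T(c)\}=h^{[3]}r^{*}S(\{a,b,c\})$ to obtain $\{h,T(b),T(c)\}\in B$ directly, concluding $h\in M(B)$; it then shows $r^{*}S(A)\subseteq B$ via $r^{*}S(a)=h^{-1}T(a)$, and finally reaches $r\in M(B)$ through an \emph{ad hoc} computation with $hr^{*}$. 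You instead specialise to $a=b=c$ to get the clean identity $T(a)^{[3]}=hh^{*}\,T(a^{[3]})$, which yields $hh^{*}\in M(B)$ without any limiting argument; then the observation $h^{[3]}r^{*}=(hh^{*})^{3/2}$ and spectral permanence in $M(B)\subseteq B^{**}$ give $(hh^{*})^{-3/2}\in M(B)$, whence $S(A)=(hh^{*})^{-3/2}B=B$ in one stroke. Only afterwards do you invoke an approximate unit, now applied to $S$ rather than $T$, to place $r$ in $M(B)$, and recover $h=r|h|$ from $|h|^{2}=r^{*}(hh^{*})r$. Your route has the advantage that $S$ is shown to be $B$-valued and surjective earlier and more transparently; the paper's route gets $h$ itself (rather than $hh^{*}$) into $M(B)$ at the first step, which makes the passage to $r$ slightly more intricate. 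Part~$(c)$ is handled essentially identically in both arguments.
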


\begin{proof} $(a)$ By Lemma \ref{l equation for multiplier} the identity \begin{equation}\label{eq identity from lemma 6} \{T(a), T(b), T(c)\} = h^{[3]} r^* S(\{a,b,c\}), \end{equation} holds for all $a,b,c\in A$. Since the left-hand-side in \eqref{eq identity from lemma 6} lies in $B$, we deduce that \begin{equation}\label{eq products of T in B}\hbox{ the products of the form $h^{[3]} r^* S(\{a,b,c\})$ lie in $B$ for all $a,b,c\in A$.}
 \end{equation}

Take a bounded approximate unit $(u_{\lambda})_{\lambda}$ in $A$ (see \cite[Theorem 1.4.2]{Ped}). It is known that $(u_{\lambda})_{\lambda}\to 1$ in the weak$^*$ topology of $A^{**}$. Fix arbitrary $b,c\in A$. By replacing $a$ with $u_{\lambda}$ in \eqref{eq identity from lemma 6}, taking weak$^*$ limits in $\lambda$ on the left hand side and applying the separate weak$^*$-continuity of the product in $A^{**}$, we get $$\{h, T(b), T(c)\} =\{T^{**} (1), T(b), T(c)\} =\hbox{w$^*$-}\lim_{\lambda} \{T(u_{\lambda}), T(b), T(c)\}.$$ However, after replacing $a$ with $u_{\lambda}$ in \eqref{eq identity from lemma 6} on the right hand side we apply that $(\{u_{\lambda},b,c\})\to \{1,b,c\} = \frac12 (b^* c + c b^*)\in A$ in norm. Therefore, $$ h^{[3]} r^* S\left(\frac{b^*c + cb^*}{2}\right) = \|\cdot\|\hbox{-}\lim_{\lambda} h^{[3]} r^* S(\{u_{\lambda},b,c\}),$$ which proves that \begin{equation}\label{eq b c arbitrary 1707} \{h, T(b), T(c)\} =  h^{[3]} r^* S\left(\frac{b^*c + cb^*}{2}\right),
 \end{equation} for all $ b,c\in A$.\smallskip

By applying the existence of cubic root for the element $\frac12 (b^* c + c b^*)\in A,$ we find $y \in A$ such that $\{y,y,y\} = \frac12 (b^* c + c b^*)$. Therefore, by \eqref{eq b c arbitrary 1707} and \eqref{eq products of T in B},  $$\{h, T(b), T(c)\} = h^{[3]} r^* S(\{y,y,y\}) \in B,$$ for all $b,c\in A$. Now, applying that $T$ is surjective we conclude that $\{h, B, B\} \subseteq B$, that is, $h$ lies in the (triple) multipliers of $B$ in $B^{**}$.\smallskip

We shall next show that \begin{equation}\label{eq r*S is B valued} r^* S(A)\subseteq B.
\end{equation} Namely, by Proposition \ref{p surjective OP operators}$(a)$ and $(e)$, $r$ is a unitary and $h$ is invertible in $B^{**}$. We know from the above that $h^{-1}\in M(B)$. Theorem \ref{thm characterization of OP} assures that $T(x) = h r^* S(x)$ for all $x\in A$. Since for each $a\in A,$ $r^*S(a)  = h^{-1} T(a)\in B$ for all $a\in A$, the statement in \eqref{eq r*S is B valued} holds.\smallskip

We claim that $h r^*\in M(B)$. By the surjectivity of $T$, for each $b\in B$ there exists $a\in A$ satisfying $T(a) =b$. The element $r^* S(a)\in B$ (see \eqref{eq r*S is B valued}), and thus, again by the surjectivity of $T$, there exists $c\in A$ such that $T(c) = r^* S(a)$. Therefore, having in mind that $r$ is the range partial isometry of $h$ we have  $$ h r^* b = h r^* T(a) = h r^* h r^* S(a) =  h h^* r r^* S(a) =  h h^* r T(c)  $$
$$= |h^*| |h^*| r T(c) = |h^*| (r^* |h^*|)^* T(c) = |h^*| (h^*)^* T(c) = |h^*| h T(c) \in B,$$ because $T(c)\in B$ and $h\in M(B)$. This concludes the proof of the claim.\smallskip

Since $r^* = h^{-1} h r^*$ with $h^{-1}, h r^*\in M(B)$, the element $r\in M(B)$.\smallskip

$(b)$ Having in mind that $r$ is unitary, $h$ invertible in $M(B)$ and the conclusion of Theorem \ref{thm characterization of OP} we easily get that $S(a) = r h^{-1} T(a)\in B$, for all $a\in A$. The latter identity combined with the surjectivity of $T$ assures the surjectivity of $S$.\smallskip

$(c)$ Let us observe that as a consequence of $(b)$ the triple homomorphism $S^{**}$ is $B^{**}$-valued. By the separate weak$^*$ continuity of the product of every von Neumann algebra and Goldstine's theorem, the identity in \eqref{eq identity from lemma 6} holds for all $a,b,c\in A^{**}$ by just replacing $T$ and $S$ with $T^{**}$ and $S^{**}$, respectively. Similarly, we have $T^{**}(a)=hr^*S^{**}(a)$ for all $a\in A^{**}$. Let us fix $a\in M(A)$. It follows from just commented properties that $$\{T^{**}(a), T(b), T(c)\} = h^{[3]} r^* S(\{a,b,c\})\in B,$$ for all $b,c\in B,$ because $a, r, h\in M(A)$. Having in ming that $T$ is surjective we deduce that $\{T^{**}(a), B, B\} \subseteq B$, that is, $T^{**}(a)$ lies in the (triple) multipliers of $B$ in $B^{**}$, or equivalently, $T^{**} (a)\in M(B)$. The second inclusion in $(c)$ is now clear from the identity $S^{**} (a) = r h^{-1} T^{**} (a)$ for all $a\in A^{**}$.
\end{proof}

Henceforth the center of a C$^*$-algebra $A$ will be denoted by $Z(A)$, and $A^{-1}$ will stand for the open subgroup of all invertible elements in $A$.

\begin{remark}\label{r invertibility and center in the homotope} Let $r$ be a unitary element in the multipliers algebra, $M(A)$, of a C$^*$-algebra $A$. As in previous pages, we consider the C$^*$-algebra $(A,\bullet_r, *_{r})$, where $a\bullet_r b = a r^* b$ and $a^{*_r} = r a^* r$. It is easy to check that the following equalities hold: $$r Z(A) = Z(A) r = Z(A,\bullet_r, *_{r}),\hbox{ and } A^{-1} = (A,\bullet_r, *_{r})^{-1}.$$ Namely, if $z\in Z(A)$ we have $z r= r z$ and  $(zr) \bullet_r a = z r r^* a = z a = a z = a r^* r z = a \bullet_r (zr)$. If $b\in A^{-1}$ with inverse $b^{-1}$, $b$ is also invertible in $(A,\bullet_r, *_{r})$ with inverse $r b^{-1} r$.\smallskip

Let $\Phi : A\to B$ be a Jordan $^*$-isomorphism, where $B$ is another C$^*$-algebra. Since $\Phi^{**}: A^{**}\to B^{**}$ also is a Jordan $^*$-isomorphism, a celebrated result due to R.V. Kadison asserts that $\Phi^{**}$ decomposes as a direct sum of a $^*$-isomorphism and a $^*$-anti-homomorphism (cf. \cite[Theorem 10]{Kad51}). Consequently, $\Phi (Z(A)) = Z(B)$. Moreover, if $a\in Z(A)$ we have
\begin{equation}\label{eq element in the center is like a homom} \Phi(a b) = \Phi (a\circ b) = \Phi(a) \circ \Phi(b) = \Phi(a) \Phi(b), \hbox{  for all } b\in A.
\end{equation}
\end{remark}

We can now state a complete characterization of those bounded linear operators between C$^*$-algebras which are orthogonality preserving and surjective.

\begin{theorem}\label{t Characterization bd OP plus surjective} Let $T: A\to B$ be a bounded linear operator between C$^*$-algebras. Let $h= T^{**} (1)$ and let $r$ denote the range partial isometry of $h$ in $B^{**}$. The following statements are equivalent:\begin{enumerate}[$(a)$] \item $T$ is surjective and orthogonality preserving;
\item The elements $h$ and $r$ belong to $M(B)$ with $h$ invertible and $r$ unitary and there exists a surjective triple homomorphism $S: A\to B$ satisfying $h^* S(x) = S(x^*)^* h$, $h S(x^*)^* = S(x) h^*$ (and hence $r^* S(x) = S(x^*)^* r$, $r S(x^*)^* = S(x) r^*$), $h r^* S(x) = S(x) r^* h,$ and $T(x) = h r^* S(x) = S(x) r^* h,$ for all $x\in A$;
\item The elements $h$ and $r$ belong to $M(B)$ with $h$ invertible and $r$ unitary and there exists a surjective Jordan $^*$-homomorphism $S: A\to (B,\bullet_{r},*_{r})$ such that $h\in Z(B^{**},\bullet_{r},*_{r}) = r Z(B^{**})= Z(B^{**}) r$,
    and $$T(x) = h r^* S(x) = S(x) r^* h, \hbox{ for all } x\in A;$$
\item $\ker(T)$ is a norm closed ideal of $A$ and the quotient map $\widehat{T}: A/\ker(T)\to B$ is (continuous) surjective and orthogonality preserving.
\end{enumerate}
\end{theorem}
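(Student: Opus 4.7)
The plan is to prove the cycle $(a)\Rightarrow(b)\Rightarrow(c)\Rightarrow(a)$ together with the equivalence $(a)\Leftrightarrow(d)$. The implication $(a)\Rightarrow(b)$ is essentially a synthesis of what has already been done: Theorem~\ref{thm characterization of OP} produces a triple homomorphism $S:A\to B^{**}$ with the stated commutation relations and $T(x)=hr^*S(x)=S(x)r^*h$; Proposition~\ref{p surjective OP operators} ensures that $r$ is unitary and $h$ is invertible in $B^{**}$; and Proposition~\ref{p surjective OP operators connected to multipliers} upgrades this to $h,r\in M(B)$ with $S(A)\subseteq B$ and $S$ surjective onto $B$.

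For $(b)\Rightarrow(c)$, the decisive observation is that since $r$ is unitary in $M(B)$ we have $B=B_2(r)$, so the twisted C$^*$-structure $(B,\bullet_r,*_r)$ makes sense and a direct computation shows that its induced triple product coincides with the original one on $B$. Hence the triple homomorphism $S:A\to B$ is automatically a Jordan $^*$-homomorphism into $(B,\bullet_r,*_r)$, and surjectivity is preserved. The commutation $hr^*S(x)=S(x)r^*h$ is exactly $h\bullet_r S(x)=S(x)\bullet_r h$; using the surjectivity of $S$ onto $B$, the weak$^*$ density of $B$ in $B^{**}$ and the separate weak$^*$ continuity of products in a von Neumann algebra, this extends to $h\bullet_r y=y\bullet_r h$ for all $y\in B^{**}$, placing $h\in Z(B^{**},\bullet_r,*_r)$. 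The identification $Z(B^{**},\bullet_r,*_r)=rZ(B^{**})=Z(B^{**})r$ is supplied by Remark~\ref{r invertibility and center in the homotope}.

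For $(c)\Rightarrow(a)$, surjectivity of $T$ follows at once from the invertibility of $h$, the surjectivity of $S$, and the formula $T(x)=hr^*S(x)$. For orthogonality preservation I would reverse the previous step: a Jordan $^*$-homomorphism into $(B,\bullet_r,*_r)$ is simultaneously a triple homomorphism $S:A\to B\subseteq B^{**}$ for the original triple product, by the same identity of triple products. The centrality of $h$ in $(B^{**},\bullet_r,*_r)$, combined with $S$ preserving $*_r=r(\cdot)^*r$, is then unwound to produce all four commutation relations $h^*S(x)=S(x^*)^*h$, $hS(x^*)^*=S(x)h^*$, $r^*S(x)=S(x^*)^*r$, $rS(x^*)^*=S(x)r^*$ (and thereby $hr^*S(x)=S(x)r^*h$). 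With these in hand, Theorem~\ref{thm characterization of OP} immediately delivers that $T$ is orthogonality preserving. The main obstacle I anticipate lies precisely in this bookkeeping with the twisted involution: one must be careful to extract the four asymmetric identities from a single centrality statement and to verify that the two factorizations $hr^*S(x)$ and $S(x)r^*h$ genuinely agree.

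Finally, $(a)\Leftrightarrow(d)$: the direction $(a)\Rightarrow(d)$ is already contained in Proposition~\ref{p surjective OP operators}$(b),(d)$, which shows that $\ker(T)$ is a norm-closed (self-adjoint) ideal and that $\widehat{T}:A/\ker(T)\to B$ is a bounded linear bijection preserving orthogonality. Conversely, if $\ker(T)$ is a closed ideal and $\widehat{T}$ is surjective and orthogonality preserving, then $T=\widehat{T}\circ\pi$ with $\pi$ the canonical $^*$-epimorphism; since $\pi$ obviously preserves orthogonality, so does $T$, and $T$ inherits surjectivity from $\widehat{T}$.
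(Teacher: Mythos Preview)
Your proposal is correct and follows essentially the same route as the paper, which simply declares the equivalences to be ``clear consequences'' of Propositions~\ref{p surjective OP operators} and~\ref{p surjective OP operators connected to multipliers} together with Remark~\ref{r invertibility and center in the homotope}, adding only the observation that $h\bullet_r S(a)=S(a)\bullet_r h$ and a density argument force $h\in Z(B^{**},\bullet_r,*_r)$. Your write-up is more explicit, but the logical skeleton---$(a)\Rightarrow(b)$ from the two propositions, $(b)\Leftrightarrow(c)$ via the remark and the identification of the twisted and original triple products, and $(a)\Leftrightarrow(d)$ from Proposition~\ref{p surjective OP operators}$(b),(d)$ plus composition with the quotient map---is identical.

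One remark on your anticipated ``obstacle'' in $(c)\Rightarrow(a)$: the bookkeeping you fear is unnecessary. From $h\in rZ(B^{**})$ and $h=r|h|$ one gets immediately that $|h|\in Z(B^{**})$, whence $hr^{*}=r|h|r^{*}=|h|$ is a central invertible positive element. Thus $T=L_{hr^{*}}\circ S$ is the composition of a surjective triple homomorphism (hence orthogonality preserving) with left multiplication by a central invertible element, and both orthogonality preservation and surjectivity of $T$ are then trivial. You need not reconstruct the four asymmetric commutation identities at all; they of course do hold and can be read off from centrality of $|h|$ together with $S(x^{*})^{*}=r^{*}S(x)r^{*}$, but they are not required for the implication.
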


\begin{proof} The equivalences are clear consequences of the result stated in Propositions \ref{p surjective OP operators} and \ref{p surjective OP operators connected to multipliers} and Remark \ref{r invertibility and center in the homotope}. Let us simply observe that $h \bullet_r S(a) = S(a) \bullet_r h$ for all $a\in A$. It follows from the surjectivity of $S$ and Goldstine's theorem that $h\in  Z(B^{**},\bullet_{r},*_{r})= r Z(B^{**}) = Z(B^{**}) r$.
\end{proof}

The next corollary is worth to be considered by itself.

\begin{corollary}\label{c Characterization bd OP plus bijective} Let $T: A\to B$ be a bijective bounded linear operator between C$^*$-algebras. Let $h= T^{**} (1)$ and let $r$ denote the range partial isometry of $h$ in $B^{**}$. The following statements are equivalent:\begin{enumerate}[$(a)$] \item $T$ is orthogonality preserving;
\item The elements $h$ and $r$ belong to $M(B)$ with $h$ invertible and $r$ unitary, and there exists a triple isomorphism $S: A\to B$ satisfying $h^* S(x)$  $= S(x^*)^* h$, $h S(x^*)^* =$ $ S(x) h^*$ (and consequently $r^* S(x)$ $= S(x^*)^* r$, $r S(x^*)^* =$ $ S(x) r^*$), $h r^* S(x) =$  $S(x) r^* h$, and $$T(x) = h r^* S(x) = S(x) r^* h, \hbox{ for all } x\in A;$$
\item The elements $h$ and $r$ belong to $M(B)$ with $h$ invertible and $r$ unitary and there exists a Jordan $^*$-isomorphism $S: A\to (B,\bullet_{r},*_{r})$ such that $h$ lies in $Z(B^{**},\bullet_{r},*_{r})= r Z(B^{**})$, and $$ T(x) = h r^* S(x) = S(x) r^* h, \hbox{ for all } x\in A;$$
\item $T$ is biorthogonality preserving;
\item $T$ is orthogonality preserving on $A_{sa}$;
\item $T$ is orthogonality preserving on $A^{+}$;
\item $T$ is biorthogonality preserving on $A_{sa}$;
\item $T$ is biorthogonality preserving on $A^+$;
\item $T$ preserves zero-triple-products, i.e. $$\{a,b,c\}=0 \Rightarrow \{T(a),T(b),T(c)\}=0;$$
\item $T$ preserves zero-triple-products in both directions, i.e. $$\{a,b,c\}=0 \Leftrightarrow \{T(a),T(b),T(c)\}=0.$$
\end{enumerate}
\end{corollary}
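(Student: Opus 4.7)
The plan is to assemble the equivalences by bootstrapping results already proved in the paper, with one new observation needed to upgrade Lemma \ref{l equation for multiplier} into a two-sided implication. The equivalences $(a)\Leftrightarrow(e)\Leftrightarrow(f)$ are Proposition \ref{r OP on Asa}, the equivalence $(a)\Leftrightarrow(i)$ is Theorem \ref{thm characterization of OP}, and $(b)\Leftrightarrow(c)$ is part of Theorem \ref{t Characterization bd OP plus surjective}; so the real task is to connect these statements to $(d)$, $(g)$, $(h)$, and $(j)$.

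For $(a)\Rightarrow(b)$ under the bijectivity hypothesis I would apply Theorem \ref{t Characterization bd OP plus surjective} to $T$ (which is in particular surjective) to obtain a surjective triple homomorphism $S:A\to B$ together with all the listed identities. The equality $\ker T=\ker S$ supplied by Lemma \ref{l kernel is an ideal} (or Proposition \ref{p surjective OP operators}) combined with the injectivity of $T$ then forces $S$ to be injective as well, so it is a triple isomorphism. The reverse implication $(b)\Rightarrow(a)$ is immediate from Theorem \ref{t Characterization bd OP plus surjective}.

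The decisive new step is $(b)\Rightarrow(j)$. Here I would invoke Lemma \ref{l equation for multiplier} to write
\begin{equation*}
\{T(a),T(b),T(c)\}=h^{[3]}\,r^{*}\,S(\{a,b,c\}), \qquad a,b,c\in A.
\end{equation*}
Since $h\in M(B)$ is invertible and $r\in M(B)$ is unitary, the element $h^{[3]}r^{*}=hh^{*}hr^{*}$ is invertible in $B^{**}$, and $S$ is a triple isomorphism hence injective; therefore $\{T(a),T(b),T(c)\}=0$ if and only if $\{a,b,c\}=0$, giving $(j)$. Recalling the standard triple characterization of orthogonality in any C$^{*}$-algebra, $a\perp b\Leftrightarrow\{a,a,b\}=0$, I would then specialize $(j)$ to $c=a$ to obtain $(d)$. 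The remaining implications $(d)\Rightarrow(g)\Rightarrow(e)$, $(d)\Rightarrow(h)\Rightarrow(f)$, and $(j)\Rightarrow(i)$ are trivial restrictions or weakenings, which closes the cycle.

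I do not anticipate a genuinely hard obstacle; the only delicate point is observing that the invertibility of $h^{[3]}r^{*}$ in $B^{**}$, together with the promotion of $S$ from surjective triple homomorphism to triple isomorphism, is exactly what converts the one-directional identity of Lemma \ref{l equation for multiplier} into the two-sided equivalence in $(j)$ and, via the triple reformulation of orthogonality, into the biorthogonality statement $(d)$.
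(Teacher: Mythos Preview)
Your proposal is correct and follows exactly the route the paper intends: the corollary is stated there without proof, as an immediate consequence of Theorem~\ref{t Characterization bd OP plus surjective}, Lemma~\ref{l kernel is an ideal}, Lemma~\ref{l equation for multiplier}, Proposition~\ref{r OP on Asa}, and Theorem~\ref{thm characterization of OP}, and you assemble these pieces in the natural way (the key observation being that bijectivity of $T$ forces $S$ to be a triple isomorphism and makes $h^{[3]}r^{*}$ invertible, which upgrades Lemma~\ref{l equation for multiplier} to the two-sided statement $(j)$). One cosmetic slip: to invoke $a\perp b\Leftrightarrow\{a,a,b\}=0$ you should specialize the \emph{middle} variable in $(j)$ (take $b=a$ and relabel), not set $c=a$; the argument is otherwise fine.
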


\section[One-parameter groups]{One-parameter groups of orthogonality preserving operators on a C$^*$-algebra}\label{sec: one-parameter semigroups Cstar algebra}

Let $X$ be a Banach space and let $B(X)$ stand for the Banach space of all bounded linear operators on $X$. A \emph{one-parameter semigroup} of bounded linear operators on $X$ is a correspondence $\mathbb{R}_0^{+} \to B(X),$ $t\mapsto T_t$ satisfying $T_{t+s} = T_{s} T_{t}$ for all $s,t\in \mathbb{R}_0^{+}$ and $T_0 =I$. It is a classic result that the mapping $t\mapsto T_t$ is uniformly continuous at the origin, i.e. $\displaystyle \lim_{t\to 0} \|T_t -I\| =0$, if and only if there exists a bounded linear operator $R\in B(X)$ such that $T_t =e^{t R}$ for all $t\in \mathbb{R}_0^+$, and in such a case, $T_t$ extends to a uniformly continuous one-parameter group on $\mathbb{R}$ (compare \cite[Proposition 3.1.1]{BratRob1987}).\smallskip

A surjective linear isometry between two C$^*$-algebras need not be, in general, a $^*$-isomorphism. R.V. Kadison settled the precise structure of all isometric linear bijections between unital C$^*$-algebras in \cite{Kad51}. The non-unital case is due to A.L.T. Paterson and A.M. Sinclair (see \cite{PatSinc72}). For each isometric linear bijection $T$ between C$^*$-algebras $A$ and $B$, there exists a unitary $u\in M(B)$ and a Jordan $^*$-isomorphism $\Phi: A\to B$ satisfying $T(a) = u \Phi (a)$ for all $a\in A$ (cf. \cite[Theorem 1]{PatSinc72}). In particular, any such a mapping $T$ satisfies $$\{T(x), T(y), T(z)\} = \frac12 ( u \Phi (x) \Phi(y)^* u^* u \Phi (z) + u \Phi (z) \Phi(y)^* u^* u \Phi (x) )$$ $$= \frac12 ( u \Phi (x) \Phi(y^*) \Phi (z) + u \Phi (z) \Phi(y^*) \Phi (x) ) = u \Phi \{x,y,z\} = T\{x,y,z\},$$ for all $x,y,z\in A$, that is, $T$ is a triple isomorphism. It is also known that every triple isomorphism between C$^*$-algebras is an isometry (see \cite[Proposition 3.4]{Harris81}). Therefore, for each C$^*$-algebra $A$ the set Iso$(A)$, of all surjective linear isometries on $A$, is precisely the set of all triple automorphisms on $A$. Clearly Iso$(A)$ is a subgroup of $B(A)$ (cf. \cite[Proposition 5.5]{Ka} for a more general statement). Another interesting subgroup of $B(A)$ is given by the set BiOP$(A),$ of all (bi-)orthogonality preserving bounded linear bijections on $A$ (let us observe that, by Corollary \ref{c Characterization bd OP plus bijective}, the prefix ``bi-'' can be inserted or relaxed without any problem). We can also consider the subgroup BiOP$_{sa}(A)$ of all elements in BiOP$(A)$ which are symmetric maps (i.e. $T(a)^* = T(a^*)$ for all $a\in A$).\smallskip

A complete description of all uniformly continuous semigroups of symmetric orthogonality preserving operators on unital C$^*$-algebras was obtained by M. Wolff as a consequence of his study on symmetric bounded linear operators  preserving orthogonality between unital C$^*$-algebras (see \cite[Theorem 2.6]{Wolff94}). Our aim here is to extend Wolff's study to uniformly continuous semigroups with values in BiOP$(A),$ where in our case $A$ will be a general C$^*$-algebra.\smallskip

In a first step towards describing one-parameter semigroups of orthogonality preserving operators it is convenient to begin with a result on uniformly continuous semigroups of surjective isometries on a C$^*$-algebra $A$. We recall that a derivation on $A$ is a linear mapping $D:A\to A$ satisfying $$D(a b) = D(a) b + a D(b),\hbox{ for all $a,b\in A$.}$$ If $A$ is unital, the binary Leibniz' rule in the line above implies that $D(1) =0$. Let us note that, by a result of S. Sakai, every derivation on a C$^*$-algebra is continuous \cite{Sak60}. A \emph{$^*$-derivation} on $A$ is a derivation which is also a symmetric mapping.\smallskip

For the next proposition we shall build our arguments on some results obtained by S. Pedersen in \cite{PedS88}.

\begin{proposition}\label{p uniparam semigroups o surjective isometries on unital C*-algebras} Let $\{U_t: t\in \mathbb{R}_0^+\}$ be a uniformly continuous one-parameter semigroup of surjective isometries on a C$^*$-algebra $A$. Suppose $U_t^{**} (1)=1$ for all $t\in \mathbb{R}_0^+$ {\rm(}i.e. $\{U_t: t\in \mathbb{R}_0^+\}$ is a uniformly continuous one-parameter semigroup of Jordan $^*$-automorphisms on $A${\rm)}. Then there exists a $^*$-derivation $D : A\to A$ satisfying $U_t = e^{t D}$ for all $t\in \mathbb{R}$, and $\{U_t: t\in \mathbb{R}_0^+\}$ is in fact a uniformly continuous one-parameter semigroup of {\rm(}associative{\rm)} $^*$-automorphisms on $A$.\smallskip

\noindent In the general case, there exist a $^*$-derivation $D : A^{**}\to A^{**}$ and an element $z_0\in A^{**}$ with $z_0^*= -z_0$ satisfying $U_t^{**} = e^{t (D+L_{z_0})}$ for all $t\in \mathbb{R}$. We can also find a $^*$-derivation $D_1 : A\to A$ and an element $z_1\in A^{**}$ with $z_1^*= -z_1$ satisfying $U_t^{**} = e^{t (D_1+M_{z_1})}$ for all $t\in \mathbb{R}$, where $M_{z_1} (x) = z_1\circ x$ {\rm(}$x\in A${\rm)}. When $A$ is unital we can take $^*$-derivations $D$ and $D_1$ on $A$ and skew symmetric elements $z_0,z_1\in A$.
\end{proposition}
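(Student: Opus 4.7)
The plan is to produce a bounded generator via classical semigroup theory, show it is a triple derivation by differentiating the triple-automorphism property, extract a skew element from its evaluation at the unit, and invoke Sinclair's theorem to upgrade a Jordan $^*$-derivation to an associative one.

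By uniform continuity, \cite[Proposition 3.1.1]{BratRob1987} provides a bounded operator $R\in B(A)$ with $U_t=e^{tR}$. Each $U_t$ is a surjective isometry, so by the Paterson--Sinclair representation \cite{PatSinc72} it is a triple automorphism; differentiating $U_t\{a,b,c\}=\{U_t(a),U_t(b),U_t(c)\}$ at $t=0$ shows that $R$, and hence its bitranspose $R^{**}$ on the unital von Neumann algebra $A^{**}$, is a triple derivation. Setting $z_0:=R^{**}(1)\in A^{**}$, I would specialize the triple-derivation identity successively to $\{1,1,1\}=1$, $\{1,a,1\}=a^*$ and $\{a,1,b\}=a\circ b$ to obtain
$$z_0^*=-z_0,\qquad R^{**}(a^*)=R^{**}(a)^*+z_0a^*+a^*z_0,$$
$$R^{**}(a\circ b)=R^{**}(a)\circ b+a\circ R^{**}(b)-\tfrac{1}{2}(az_0b+bz_0a).$$
A short computation from these three relations shows that $D:=R^{**}-L_{z_0}$ is a Jordan $^*$-derivation on $A^{**}$ with $D(1)=0$. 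Invoking Sinclair's classical theorem that every Jordan derivation on a semiprime Banach algebra is an associative derivation, $D$ is then an associative $^*$-derivation on $A^{**}$, so $R^{**}=D+L_{z_0}$ and $U_t^{**}=e^{t(D+L_{z_0})}$.

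For the alternative form I would take $z_1:=z_0$ and $D_1:=R^{**}-M_{z_1}$. The elementary identity $L_{z_0}-M_{z_0}=\tfrac{1}{2}\mathrm{ad}(z_0)$, together with the fact that $\mathrm{ad}(z_0)$ is a $^*$-derivation (since $z_0$ is skew), yields $D_1=D+\tfrac{1}{2}\mathrm{ad}(z_0)$, which is again a $^*$-derivation. To see that $D_1$ restricts to a map $A\to A$, I would use the Paterson--Sinclair representation $U_t(x)=u_t\Phi_t(x)$ with $u_t=U_t^{**}(1)\in M(A)$ unitary: norm-continuity of $t\mapsto U_t$ gives $u_t=1+tz_0+O(t^2)$ in $A^{**}$, so $z_0=\lim_{t\to 0}(u_t-1)/t$ is a norm limit of elements of $M(A)$ and therefore lies in $M(A)$. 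Thus $M_{z_1}$ preserves $A$, and $D_1=R-M_{z_1}$ is a $^*$-derivation of $A$. Under the extra hypothesis $U_t^{**}(1)=1$ one has $u_t=1$ for all $t$ and so $z_0=z_1=0$, giving $R=D=D_1$ an associative $^*$-derivation on $A$; the standard Leibniz argument (differentiating $t\mapsto U_t(ab)-U_t(a)U_t(b)$) then exhibits each $U_t$ as an associative $^*$-automorphism. When $A$ is unital, the same reasoning places $z_0$, $z_1$ and $D$ in $A$ directly.

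The main technical obstacle I expect is the passage from a Jordan $^*$-derivation to an associative $^*$-derivation: the triple product encodes only the Jordan structure of $A$, so the decomposition $R^{**}=D+L_{z_0}$ a priori delivers only a Jordan derivation, and Sinclair's theorem is essential to upgrade it. A secondary subtlety is confirming that $z_1=R^{**}(1)$ lies in $M(A)$ rather than merely in $A^{**}$; this relies on the Paterson--Sinclair representation together with norm-closedness of $M(A)$ in $A^{**}$.
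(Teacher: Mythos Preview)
Your approach is correct and parallels the paper's proof closely: both extract a bounded generator $R$, recognise it as a triple/Jordan derivation by differentiating, peel off the skew element $z_0=R^{**}(1)$, and appeal to a Jordan-to-associative derivation theorem. The paper cites Pedersen \cite[Corollary~3.3]{PedS88} as a black box for the decomposition $R^{**}=D+L_{z_0}$ with $D$ a $^*$-derivation, whereas you essentially reprove that corollary by hand via the identities obtained from $\{1,1,1\}$, $\{1,a,1\}$, $\{a,1,b\}$; and the paper invokes Johnson \cite[Theorem~6.3]{John96} rather than Sinclair for the upgrade, but these are interchangeable here. One point where your write-up is actually more complete than the paper's: you justify $D_1:A\to A$ by showing $z_0\in M(A)$ (via $z_0=\lim_{t\to 0}(u_t-1)/t$ with $u_t\in M(A)$ and $M(A)$ norm closed), a step the paper's proof leaves implicit. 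Your closing sketch that $U_t=e^{tD}$ is multiplicative (the ``Leibniz argument'') is a bit telegraphic---the clean version is to check that $t\mapsto U_{-t}(U_t(a)U_t(b))$ has zero derivative---but the idea is standard and correct.
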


\begin{proof} Suppose first that $U_t^{**} (1)=1$ for all $t\in \mathbb{R}$. Let us find $R\in B(A)$ such that $U_t = e^{t R}$ for all $t\in \mathbb{R}$ (cf. \cite[Proposition 3.1.1]{BratRob1987}). Since for each real $t$ the mapping $U_t$ is a Jordan $^*$-automorphism we have $$e^{t R} (a\circ b) = e^{t R}(a) \circ e^{t R}(b),\hbox{ and } e^{t R} (a)^* = e^{t R}(a^*) \ \ (\forall t\in \mathbb{R}, a,b\in A).$$ Taking derivatives at $t=0$ we get $$R (a\circ b) = R(a)\circ b + a\circ R(b), \hbox{ and } R(a)^* = R(a^*) \hbox{ for all } a,b\in A,$$ which proves that $R= D$ is a Jordan $^*$-derivation. It follows from \cite[Theorem 6.3]{John96} that $D$ is a $^*$-derivation.\smallskip

Suppose next that we are in the general case. Clearly, $\{U_t^{**}: t\in \mathbb{R}_0^+\}$ is a uniformly continuous (in particular, strongly continuous) one-parameter semigroup of surjective isometries on $A^{**}$. By \cite[Proposition 3.1.1]{BratRob1987} there exists $R\in B(A)$ such that $U^{**}_t = e^{t R^{**}}$ for all $t\in \mathbb{R}$.
%
By \cite[Corollary 3.3]{PedS88} the element $z_0 = R^{**}(1)\in A^{**}$ is skew symmetric (i.e. $z_0^* = -z_0$) and the mapping $D = R^{**} - L_{z_0}$ is a $^*$-derivation on $A^{**}$. For $z_1 = {z_0}$, the mapping $[\frac{z_0}{2}, \cdot]$ is a $^*$-derivation on $A^{**}$, and hence $D_1 = D + [\frac{z_0}{2},\cdot]$ is a $^*$-derivation with $R^{**} = D_1 + M_{z_1}$ and $z_1$ skew symmetric.
\end{proof}

\begin{remark}\label{r Sakai's thm on inner derivations} Another celebrated result due to Sakai proves that every (associative) derivation $D$ on a von Neumann algebra $M$ is inner, that is, there exists $z_0\in M$ satisfying $D(a) = [z_0,a]= z_0 a - a z_0$ for all $a\in M$ (see \cite[Theorem 4.1.6]{Sa}). In particular, every associative derivation annihilates on $Z(M)$, and every derivation on a commutative C$^*$-algebra is zero (see \cite[Lemma 4.1.2]{Sa}). If $a\in Z(A)$ and $D$ is a derivation on a C$^*$-algebra $A$ we have $$D L_a (x) = D(a x) = D(a) x + a D(x) = L_a D(x),$$ that is, $L_a$ and $D$ commute. Similarly, $D R_a = R_a D$ for $a$ and $D$ under these assumptions.\smallskip

We can therefore conclude from Proposition \ref{p uniparam semigroups o surjective isometries on unital C*-algebras} that every uniformly continuous one-parameter semigroup $\{U_t: t\in \mathbb{R}_0^+\}$ of Jordan $^*$-automorphisms on a commutative C$^*$-algebra $C$ satisfies $U_t(a) =a$ for all $t\in \mathbb{R},$ $a\in C$, that is, $\{U_t: t\in \mathbb{R}_0^+\}$ is constant $\{Id_{C}\}$.
\end{remark}

Let $a$ be an element in a C$^*$-algebra $A$ and let $a=u|a|$ be its polar decomposition. Let us denote by $C^*(|a|)$ and $C^*(|a|^3)$ the C$^*$-subalgebras of $A$ generated by $|a|$ and $|a|^3$, respectively. According to our notation, $a^{[3]} = u |a|^3$ is the polar decomposition of $a^{[3]}$. By applying the Stone-Weierstrass theorem it can be easily shown that $C^*(|a|)=C^*(|a|^3).$ We claim that the following property holds:
\begin{equation}\label{eq uniqueness of cubic root}\hbox{ $z^{[3]} = a$ for some $a,z$ in $A$ implies that $z=a^{[\frac13]}\in u C^*(|a|)$.}
\end{equation} Namely, let $z =w |z|$ be the polar decomposition of $z$. Clearly $a\in w C^*(|z|)$ and $a= w |z|^3$ is the polar decomposition of $a$, that is $u=w$ and $|a| = |z|^3$. Therefore $z= u |a|^{\frac13} \in u C^*(|a|)$.\smallskip

Our next goal is a generalization of Wolff's characterization of uniformly continuous  one-parameter semigroups of symmetric orthogonality preserving operators on unital C$^*$-algebras \cite[Theorem 2.6]{Wolff94}.

\begin{theorem}\label{t Wolff one-parameter for OP} Let $A$ be a C$^*$-algebra. Suppose $\{T_t: t\in \mathbb{R}_0^{+}\}$ is a family of orthogonality preserving bounded linear bijections on $A$ with $T_0=Id$. For each $t\geq 0$ let $h_t = T_t^{**} (1)$, let $r_t$ be the range partial isometry of $h_t$ in $A^{**}$ and let $S_t$ denote the triple automorphism on $A$ associated with $T_t$ given by Corollary \ref{c Characterization bd OP plus bijective}. The following are equivalent:\begin{enumerate}[$(a)$]\item $\{T_t: t\in \mathbb{R}_0^{+}\}$ is a uniformly continuous  one-parameter semigroup of orthogonality preserving operators on $A$;
\item $\{S_t: t\in \mathbb{R}_0^{+}\}$ is a uniformly continuous one-parameter semigroup of surjective linear isometries {\rm(}i.e. triple isomorphisms{\rm)} on $A$ {\rm(}and hence there exists a $^*$-derivation $D$ on $A^{**}$ and a skew symmetric element $z_1$ in $A^{**}$
    such that $S_t^{**} = e^{t (D+M_{z_1})}$ for all $t\in \mathbb{R}${\rm)}, the mapping $t\mapsto h_t $ is continuous at zero, and the identity \begin{equation}\label{eq new idenity in the statement of theorem 1 on one-parameter} h_{t+s} =  h_t r_t^* S_t^{**} (h_s)=T^{**}_t (h_s),
    \end{equation} holds for all $s,t\in \mathbb{R}.$
\end{enumerate}
\end{theorem}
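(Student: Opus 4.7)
The approach is to exploit a cleaner reformulation of the structure in Corollary \ref{c Characterization bd OP plus bijective}. Since each $T_t$ is a bijective orthogonality preserver, Theorem \ref{t Characterization bd OP plus surjective}$(c)$ places $h_t$ in $r_t Z(A^{**})$; uniqueness of the polar decomposition of the invertible element $h_t\in M(A)$ identifies the central factor with $|h_t|$, so $h_t=r_t|h_t|$ with $r_t$ unitary in $M(A)$ and $|h_t|$ positive and invertible in $Z(M(A))$. The identities $h_t r_t^*=r_t^* h_t=|h_t|$ collapse the formulas of Corollary \ref{c Characterization bd OP plus bijective} to the \emph{multiplicative} form
\[
T_t(x) \,=\, |h_t|\, S_t(x) \,=\, S_t(x)\, |h_t|, \qquad x\in A,
\]
with $|h_t|$ a positive invertible central multiplier and $S_t$ a triple automorphism of $A$ whose bitranspose at $1$ is $r_t$. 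Both implications then reduce to propagating this factored form through the one-parameter semigroup law.

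For $(a)\Rightarrow(b)$, the identity $h_{t+s}=T_t^{**}(h_s)$ follows from $T_{t+s}^{**}=T_t^{**}\circ T_s^{**}$ applied to $1$, and the extension of $T_t(x)=h_t r_t^* S_t(x)$ to $A^{**}$ by separate weak$^*$-continuity of the product (Sakai) rewrites this as $h_{t+s}=h_t r_t^* S_t^{**}(h_s)$. The bound $\|h_t-h_{t_0}\|\leq\|T_t-T_{t_0}\|$ combined with continuous functional calculus around the invertible $h_{t_0}\in M(A)$ produces norm-continuous paths $t\mapsto |h_t|,\,|h_t|^{-1},\,r_t=h_t|h_t|^{-1}$. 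Setting $S_t:=L_{|h_t|^{-1}}T_t$ gives a uniformly continuous family of triple automorphisms of $A$; applying $T_{t+s}=T_tT_s$ to the factored form $T_u=L_{|h_u|}S_u$ and invoking uniqueness of that decomposition (a positive central multiplier which is also unitary must be $1$) yields $S_{t+s}=S_tS_s$. Proposition \ref{p uniparam semigroups o surjective isometries on unital C*-algebras} applied to $\{S_t\}$ then produces the $^*$-derivation $D$ and the skew element $z_1$.

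For $(b)\Rightarrow(a)$, define $T_t(x):=h_t r_t^* S_t(x)=|h_t|S_t(x)$; each $T_t$ is a bijective orthogonality preserver by Corollary \ref{c Characterization bd OP plus bijective}. Factor $S_t=L_{r_t}\Phi_t$ via Kadison-Paterson-Sinclair, so that $\Phi_t^{**}$ is a Jordan $^*$-automorphism of $A^{**}$ preserving $Z(A^{**})$. Since $|h_s|\in Z(A^{**})$ commutes with $S_s(x)$, the product $|h_s|S_s(x)$ coincides with the Jordan product $|h_s|\circ S_s(x)$, and $\Phi_t^{**}$ transports it to $\Phi_t^{**}(|h_s|)\circ \Phi_t^{**}(S_s(x))=\Phi_t^{**}(|h_s|)\,\Phi_t^{**}(S_s(x))$; a short rearrangement using the centrality of $\Phi_t^{**}(|h_s|)$ yields
\[
T_tT_s(x) \,=\, |h_t|\,\Phi_t^{**}(|h_s|)\, S_{t+s}(x).
\]
Expanding the hypothesized identity $h_{t+s}=h_t r_t^* S_t^{**}(h_s)$ with the same factorizations produces the polar decomposition $h_{t+s}=r_t\Phi_t^{**}(r_s)\cdot|h_t|\Phi_t^{**}(|h_s|)$, whose positive central part, by uniqueness, is $|h_{t+s}|=|h_t|\Phi_t^{**}(|h_s|)$; hence $T_tT_s(x)=|h_{t+s}|S_{t+s}(x)=T_{t+s}(x)$. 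Uniform continuity of $\{T_t\}$ at zero is inherited from that of $\{S_t\}$ and $t\mapsto|h_t|$, and propagates along the half-line via the semigroup property.

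The principal obstacle is that the triple isomorphism $S_t$ is not an algebra homomorphism, so the central factor $|h_s|$ cannot be pushed through it directly. This is resolved by the Kadison-Paterson-Sinclair splitting $S_t=L_{r_t}\Phi_t$: the Jordan $^*$-automorphism $\Phi_t$ preserves the center, and for central elements the associative and Jordan products coincide, so $|h_s|\,S_s(x)=|h_s|\circ S_s(x)$ transforms cleanly under $\Phi_t^{**}$. A secondary technical point is norm continuity of $|h_t|^{\pm 1}$ and $r_t$ at an arbitrary $t_0$ (not merely at $0$), which is supplied by continuous functional calculus applied near the invertible element $h_{t_0}\in M(A)$.
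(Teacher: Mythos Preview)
Your argument is correct, and it follows a genuinely different line from the paper's proof. The key simplification you exploit is that $|h_t|=r_t^*h_t$ lies in $Z(A^{**})$ (this is immediate from $h_t\in r_t Z(A^{**})$), which lets you write $T_t=L_{|h_t|}S_t$ with a \emph{globally} central positive multiplier; the paper instead keeps $h_t$ only as a central element of the homotope $(A^{**},\bullet_{r_t},*_{r_t})$ and carries this extra bookkeeping throughout. Your Kadison--Paterson--Sinclair splitting $S_t=L_{r_t}\Phi_t$ is exactly the translation of the paper's statement that $S_t:A\to(A,\bullet_{r_t},*_{r_t})$ is a Jordan $^*$-isomorphism, so at that level the two proofs coincide. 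Where they diverge substantially is in extracting the semigroup law for $\{S_t\}$ (and, reciprocally, for $\{T_t\}$): the paper proves $r_{t+s}=S_t^{**}(r_s)$ by an inductive cubic-root identity $h_{t+s}^{[1/3^n]}=h_t^{[1/3^n]}r_t^*S_t^{**}(h_s^{[1/3^n]})$ followed by a strong$^*$-limit argument, and only then unwinds the Jordan-$^*$-isomorphism structure to reach $S_{t+s}=S_tS_s$; you bypass this entirely by reading off both $|h_{t+s}|=|h_t|\,\Phi_t^{**}(|h_s|)$ and $S_{t+s}=S_tS_s$ from uniqueness of the decomposition ``positive invertible central element times triple automorphism'' (equivalently, from uniqueness of polar decomposition). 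Your route is shorter and more elementary in that it avoids the triple functional calculus and strong$^*$ topology altogether. One presentational point: in your sketch of $(a)\Rightarrow(b)$ you invoke the uniqueness of $T_u=L_{|h_u|}S_u$ directly, but the step $S_t\circ L_{|h_s|}=L_{\Phi_t^{**}(|h_s|)}\circ S_t$ is what makes $T_tT_s$ land in that form, and that step needs exactly the $\Phi_t$-factorisation you spell out only later for $(b)\Rightarrow(a)$; you should make that explicit in both directions.
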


\begin{proof} $(a)\Rightarrow (b)$ The semigroup $\{T_t: {t\in \mathbb{R}_0^+}\}$ extends to a uniformly continuous one-parameter group on $\mathbb{R}$. 
By hypothesis $T_t:A\to A$ is an orthogonality preserving and bijective bounded linear operator. By Corollary \ref{c Characterization bd OP plus bijective}, for each real $t$ there exist (unique) $h_t,r_t\in M(A)$ with $h_t$ invertible and $r_t$ unitary and a triple automorphism $S_t: A\to A$ satisfying $T^{**}_t (1) = h_t$, $r_t$ is the range partial isometry of $h_t$, $h_t^* S_t(x) = S_t(x^*)^* h_t$, $h_t S_t(x^*)^* = S_t(x) h_t^*$ (and hence $r_t^* S_t(x) = S_t(x^*)^* r_t$, $r_t S_t(x^*)^* = S_t(x) r_t^*$), $h_t r_t^* S_t(x) = S_t(x) r_t^* h_t$, and $T_t(x) = h_t r_t^* S_t(x) = S_t(x) r_t^* h_t,$ for all $x\in A$. We further know that $h_t, r_t\in Z(A^{**}, \bullet_{r_t},*_{r_t}) = r_t Z(A) = Z(A) r_t$ for all $t\in \mathbb{R}$.\smallskip

We therefore have two mappings $t\mapsto h_t$ and $t\mapsto S_t$ from $\mathbb{R}$ into $M(A)\cap (r_t Z(A^{**}))$ and Iso$(A)$, respectively. Clearly, $T_0 = Id$ implies that $h_0 = 1$.\smallskip

By applying that $\{T_t: t\in \mathbb{R}\}$ is a uniformly continuous group, the mapping $t\mapsto h_t = T_t^{**} (1) $ is continuous at zero, because $\| h_t -1\| = \|T_t^{**} (1) - T^{**}_0 (1)\|\leq \|T_t - T_0\|$, for all real $t$. It is well known that the mapping $a\mapsto a^{-1}$ is continuous on the set of invertible elements in $A^{**}$ (see, for example, \cite[Corollary I.1.8]{Tak}). We can therefore conclude that the mapping $t\mapsto h_t^{-1}$ is continuous at zero. Clearly, the mapping $t\mapsto |h_t|^2 = h_t^* h_t$ is continuous at zero with $|h_t|^2$ invertible for all $t$. It follows from \cite[Proposition I.4.10 and Corollary I.1.8]{Tak} that the mapping $t\mapsto |h_t|^{-1}$ is continuous at zero. Consequently, the mappings $t\mapsto r_t = h_t |h_t|^{-1}$ and $t\mapsto S_t = r_t^* h_t^{-1} T_t$ are continuous at zero.\smallskip

We claim that $\{ S_{t} : t\in \mathbb{R}\}$ is a one-parameter semigroup. It will be proved in several steps.\smallskip

First, a direct application of the hypotheses gives
\begin{equation}\label{eq hs+t} h_{t+s} = T_{t+s}^{**} (1) = T_{t}^{**} T_{s}^{**} (1) = h_t r_t^* S_t^{**} (h_s), \hbox{ for all } s,t\in \mathbb{R},
\end{equation} which proves \eqref{eq new idenity in the statement of theorem 1 on one-parameter}.\smallskip

Now, applying that 
$h_{t}, h_{t}^{[\frac{1}{3^n}]}\in Z(A^{**},\bullet_{r_{t}},*_{r_{t}})$ 
and that $S_t^{**}$ is a triple isomorphism, we can check that \begin{equation}\label{eq 3n cubic roots h} h_{t+s}^{[\frac{1}{3^n}]} = h_{t}^{[\frac{1}{3^n}]} r_t^* S_t^{**}(h_s^{[\frac{1}{3^n}]}), \hbox{ for all } n\in\mathbb{N}.
 \end{equation} Namely, arguing by induction on $n$, we assume that the identity holds for $n$ (the case $n=1$ essentially follows by the same arguments).  Since $h_{t}, h_{t}^{[\frac{1}{3^n}]}\in Z(A^{**},\bullet_{r_{t}},*_{r_{t}}),$ the triple product also satisfies that $$\{x,y,z\} = \frac12 (x \bullet_{r_t} y^{*_{r_t}} \bullet_{r_t} z + z \bullet_{r_t} y^{*_{r_t}} \bullet_{r_t} x), \hbox{ for all } x,y,z\in A,$$ and $S_t^{**}$ is a triple isomorphism (and thus $S_t : A\to (A,\bullet_{r_t}, *_{r_t})$ is a Jordan $^*$-isomorphism) we deduce that $$ \left(h_{t}^{[\frac{1}{3^{n+1}}]} r_t^* S_t^{**}(h_s^{[\frac{1}{3^{n+1}}]})\right)^{[3]} = \left(h_{t}^{[\frac{1}{3^{n+1}}]}\right)^{[3]} \left(r_t^*\right)^{[3]} S_t^{**}\left(h_s^{[\frac{1}{3^{n+1}}]}\right)^{[3]}   $$
$$ = h_{t}^{[\frac{1}{3^{n}}]} r_t^* S_t^{**}\left( \left(h_s^{[\frac{1}{3^{n+1}}]}\right)^{[3]}\right) = h_{t}^{[\frac{1}{3^{n}}]} r_t^* S_t^{**}\left(h_s^{[\frac{1}{3^{n}}]} \right) = h_{t+s}^{[\frac{1}{3^n}]},$$ by the induction hypothesis. It follows from \eqref{eq uniqueness of cubic root} that $h_{t}^{[\frac{1}{3^{n+1}}]} r_t^* S_t^{**}(h_s^{[\frac{1}{3^{n+1}}]}) =  h_{t+s}^{[\frac{1}{3^{n+1}}]},$ which concludes the induction argument.\smallskip

Now, since $(h_{t+s}^{[\frac{1}{3^n}]})_n\to r_{t+s}$, $(h_{t}^{[\frac{1}{3^n}]})_n\to r_{t}$ and $(h_{s}^{[\frac{1}{3^n}]})_n\to r_{s}$ in the strong$^*$ topology and the product of every von Neumann algebra is jointly strong$^*$ continuous on bounded sets (cf. \cite[Proposition 1.8.12]{Sa}), by taking strong$^*$ limits in \eqref{eq 3n cubic roots h} we derive \begin{equation}\label{eq r t+s via St} r_{t+s} = r_t r_t^* S_t^{**} (r_s) = S_t^{**}(r_s),
 \end{equation} where we have applied that $S_t^{**}$ is strong$^*$ continuous.
Furthermore, since  $S_t : A\to (A,\bullet_{r_t}, *_{r_t})$ is a Jordan $^*$-isomorphism \begin{equation}\label{eq rt+s} r_{t} r_{t+s}^* r_{t}= r_t  S_t^{**}(r_s)^* r_{t} = S_t^{**}(r_s)^{*_{r_t}}= S_t^{**} (r_s^*).
\end{equation}

Having in mind that $S_t: A\to A$ is a triple isomorphism, it follows from \eqref{eq r t+s via St} that
\begin{equation}\label{eq St is a Jordan star isom from rt to rt+s} \hbox{$S_t: (A, \bullet_{r_s}, *_{r_{s}}) \to (A, \bullet_{r_{t+s}}, *_{r_{t+s}})$ is a Jordan $^*$-isomorphism.}
\end{equation}

Fix now $a\in A$ and consider the identity
\begin{align} h_t r_t^* S_t^{**} (h_s) r_{t+s}^* S_{t+s} (a) &= \hbox{(by \eqref{eq hs+t})}=  h_{t+s} r_{t+s}^* S_{t+s} (a)  = T_{t+s} (a)   \nonumber \\
&= T_{t} T_{s} (a) = h_t r_t^* S_t (h_s r_s^* S_s(a)).
\end{align}

Having in mind that $ h_t r_t^*$ is invertible we get
\begin{equation}\label{eq Feb02 1} S_t^{**} (h_s) r_{t+s}^* S_{t+s} (a) = S_t (h_s r_s^* S_s(a)).
\end{equation}
Since the element $h_s r_s^*\in Z(A) r_s r_s^* = Z(A)$ and $S_t : A\to (A,\bullet_{r_t}, *_{r_t})$ is a Jordan $^*$-isomorphism, the comments in Remark \ref{r invertibility and center in the homotope}~\eqref{eq element in the center is like a homom} imply that the element in the right-hand-side of the previous equality writes in the form \begin{equation}\label{eq a 09 02 2020} S_t (h_s r_s^* S_s(a)) = S_t^{**} (h_s r_s^*) r_t^* S_t (S_s(a)).
\end{equation}

Now, by applying that $S_t: (A, \bullet_{r_s}, *_{r_{s}}) \to (A, \bullet_{r_{t+s}}, *_{r_{t+s}})$ is a Jordan $^*$-isomorphism and $h_s\in Z (A, \bullet_{r_s}, *_{r_{s}})$, we deduce from Remark \ref{r invertibility and center in the homotope}~\eqref{eq element in the center is like a homom} that $$ S_t^{**} (h_s r_s^*)  = S_t^{**} (h_s \bullet_{r_s} 1) = S_t^{**} (h_s) \bullet_{r_{t+s}} S_t^{**}(1) = S_t^{**} (h_s) r_{t+s}^* r_t,$$ which combined with \eqref{eq a 09 02 2020} gives
$$S_t (h_s r_s^* S_s(a)) = S_t^{**} (h_s) r_{t+s}^* r_t r_t^* S_t (S_s(a))= S_t^{**} (h_s) r_{t+s}^* S_t S_s(a).$$ By combining this identity with \eqref{eq Feb02 1} and having in mind that $S_t^{**} (h_s) r_{t+s}^*$ must be invertible, we arrive at  $S_{t+s} (a) = S_t S_s(a),$ for all $a\in A$, $t,s\in \mathbb{R}$. \smallskip

We have shown that $\{S_t : t\in \mathbb{R}\}$ is a group which is continuous at zero, equivalently, $\{ S_{t} : t\in \mathbb{R}\}$ is a uniformly continuous one-parameter group of surjective isometries on $A$. By Proposition \ref{p uniparam semigroups o surjective isometries on unital C*-algebras} there exists a (continuous) $^*$-derivation $D : A^{**}\to A^{**}$ and a skew symmetric element $z_1\in A^{**}$ such that $S_t^{**} = e^{t(D+M_{z_1})}$ for all $t\in \mathbb{R}$.\smallskip

$(b)\Rightarrow (a)$ By hypotheses, $\{S_t: t\in \mathbb{R}_0^{+}\}$ is a uniformly continuous one-parameter semigroup of surjective linear isometries (i.e. triple isomorphisms) on $A,$ the mapping $t\mapsto h_t $ is continuous at zero, and the identity \eqref{eq new idenity in the statement of theorem 1 on one-parameter} holds for all $t\in \mathbb{R}$. Since $h_t$ is invertible for all $t$ in $\mathbb{R},$ the arguments in the proof of $(a)\Rightarrow (b)$ are valid to prove that the mappings $t\mapsto r_t = h_t |h_t|^{-1}$ and $t\mapsto T_t = h_t r_t^* S_t$ are continuous at zero.\smallskip

We shall finally prove that $\{T_t: t\in \mathbb{R}_0^{+}\}$ is a one-parameter semigroup. As in the proof of $(a)\Rightarrow (b)$~\eqref{eq r t+s via St}, we can deduce from \eqref{eq new idenity in the statement of theorem 1 on one-parameter} that $r_{t+s} = r_t r_t^* S^{**} (r_s) = S_t(r_s)$ for all $s,t\in \mathbb{R}^+_0$. Therefore $$S_t: (A, \bullet_{r_s}, *_{r_{s}}) \to (A, \bullet_{r_{t+s}}, *_{r_{t+s}})$$ is a Jordan $^*$-isomorphism. Since $h_s\in Z(A^{**},\bullet_{r_s},*_{r_{s}})$, it follows from Remark \ref{r invertibility and center in the homotope}\eqref{eq element in the center is like a homom} that
$$\begin{aligned}T_t T_s (a) &= h_t r_t^* S_t (h_t r_s^* S_s(a))= h_t r_t^* S_t (h_t \bullet_{r_s} S_s(a)) \\ &= h_t r_t^* S_t^{**} (h_t) \bullet_{r_{t+s}} S_t (S_s(a))= h_{t+s} {r_{t+s}^*} S_t (S_s(a)) \\
&= h_{t+s} {r_{t+s}^*} S_{t+s} (a) =T_{t+s} (a),
 \end{aligned}$$ for all $a\in A$, where at the antepenultimate and at the penultimate equalities we applied \eqref{eq new idenity in the statement of theorem 1 on one-parameter} and the fact that $\{S_t: t\in \mathbb{R}_0^{+}\}$ is a one-parameter semigroup, respectively.
\end{proof}

Suppose that in Theorem \ref{t Wolff one-parameter for OP} we additionally assume that each $T_t$ is a symmetric operator, that is,  $\{T_t: t\in \mathbb{R}_0^{+}\}$ is a uniformly continuous one-parameter semigroup of orthogonality preserving symmetric operators on $A$. By Corollary \ref{c Characterization bd OP plus bijective}, the elements $r_t$ and $h_t$ lie in $M(A)_{sa}$ with $h_t$ invertible and $r_t$ a symmetric unitary, or equivalently, a symmetry. 
It is explicitly shown in \cite[$(15)$ and $(16)$ in the proof of Theorem 17]{BurFerGarMarPe2008} that $r_t T_t(a) = T_t(a^*)^* r_t$ and $h_t T_t(a) = T_t(a^*)^* h_t$ for all $a\in A$ $t\in \mathbb{R}$. Having in mind that $T_{t}$ is symmetric and bijective we conclude that $h_t$ and $r_t$ both lie in $Z(A^{**})$.\smallskip

On the other hand, we know from Theorem \ref{t Characterization bd OP plus surjective} above that $\{S_t\}_{t\in \mathbb{R}}$ is a one-parameter group. We claim that, under the extra assumptions here, $\{r_t S_t\}_{t\in \mathbb{R}}$ is a one-parameter group of Jordan $^*$-isomorphisms. Namely, we have seen in the proof of the just quoted theorem that $S_t : A\to (A,\bullet_{r_t}, *_{r_t})$ is a Jordan $^*$-isomorphism. Since $r_s\in Z(A^{**})$, we deduce from Remark \ref{r invertibility and center in the homotope}\eqref{eq element in the center is like a homom} that $$r_t S_t (r_s S_s (a)) = r_t S_t^{**} (r_s) r_t^* S_t S_s (a) = r_{t+s} S_{t+s} (a), \hbox{ for all } a\in A,$$
where in the last equality we applied \eqref{eq r t+s via St} from the proof of Theorem \ref{t Characterization bd OP plus surjective} and the fact that $r_t$ is central. This finishes the proof of the claim.\smallskip

Combining that $\{r_t S^{**}_t|_{Z(M(A))} : Z(M(A))\to Z(M(A)) \}_{t\in \mathbb{R}}$ is a one-parameter group of Jordan $^*$-isomorphisms, the fact that $h_s\in Z(A^{**})$ and Remark \ref{r Sakai's thm on inner derivations} we get $$h_{t+s} =T_{t+s} (1)= T_t T_s (1) = h_t (r_t S_t) (h_s) = h_t h_s,$$ and clearly $r_{t+s} =  r_t r_s,$  for all $t,s\in \mathbb{R}.$\smallskip

Finally, since the mapping $t\mapsto L_{h_t}|_{Z(M(A))}$ is a uniformly continuous one-parameter group on the unital commutative C$^*$-algebra $Z(M(A))$, we can find a bounded linear operator $R\in B(Z(M(A)))$ satisfying $L_{h_t}|_{Z(M(A))} = e^{t R}$ for all $t\in \mathbb{R}$ (cf. \cite[Proposition 3.1.1]{BratRob1987}). This implies the existence of $h\in Z(M(A))$ such that $h_t = e^{t h}$ for all $t\in \mathbb{R}$. Now an application of Proposition \ref{p uniparam semigroups o surjective isometries on unital C*-algebras} proves the existence of a $^*$-derivation $d$ on $A$ satisfying $$T_t (a) = e^{t h} (r_t S_t) (a) = e^{t h} e^{t d} (a) = e^{t (L_h+ d)} (a),$$ for all $a\in A$, $t\in \mathbb{R}$ (here we applied that $h$ is central). Clearly for $h$ and $d$ as above the set $\{ e^{t (L_h+ d)} : t\in \mathbb{R}\}$ is a uniformly continuous one-parameter group.\smallskip

We have found a non-unital version of the result proved by M. Wolff in \cite[Theorem 2.6]{Wolff94}.

\begin{corollary}\label{c Wolff one-parameter for OP symmetric} Let $A$ be a C$^*$-algebra. Suppose $\{T_t: t\in \mathbb{R}_0^{+}\}$ is a family of orthogonality preserving and symmetric bounded linear bijections on $A$ with $T_0=Id$. For each $t,$ let $h_t = T_t^{**} (1)$, let $r_t$ be the range partial isometry of $h_t$ in $A^{**}$ and let $S_t$ denote the triple isomorphism associated with $T_t$ given by Corollary \ref{c Characterization bd OP plus bijective}. The following are equivalent:\begin{enumerate}[$(a)$]\item $\{T_t: t\in \mathbb{R}_0^{+}\}$ is a uniformly continuous  one-parameter semigroup of orthogonality preserving operators on $A$;
\item There exists $h\in Z(M(A))$ and a $^*$-derivation $d$ on $A$ such that $h_t = e^{t h}$ and $$T_t (a) =  e^{t (L_h+ d)} (a),$$ for all $a\in A$, $t\in \mathbb{R}$.
\end{enumerate}

\noindent Moreover, if any of the previous statements holds, the sets $\{r_t:t\in \mathbb{R}\}$ and  $\{h_t:t\in \mathbb{R}\}$ are one-parameter groups in $Z(M(A))$.
\end{corollary}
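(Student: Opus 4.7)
The plan is to bootstrap Corollary \ref{c Wolff one-parameter for OP symmetric} from Theorem \ref{t Wolff one-parameter for OP}, exploiting the extra rigidity imposed by symmetry. My first step would be to show that symmetry forces $h_t, r_t \in Z(M(A))$ for every $t$. Indeed, the intertwining identities recorded in Theorem \ref{thm characterization of OP} give $h_t T_t(a) = T_t(a^*)^* h_t$ and similarly for $r_t$; since $T_t$ is symmetric and surjective, these become $h_t b = b h_t$ and $r_t b = b r_t$ for every $b\in A$, so $h_t$ and $r_t$ are central multipliers (and self-adjoint, with $r_t$ a central symmetry). In particular $r_t^* = r_t$ and the Peirce 2-homotope $(A, \bullet_{r_t}, *_{r_t})$ reduces to a simple twist by a central symmetry.

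With centrality in hand, Theorem \ref{t Wolff one-parameter for OP} tells me that $\{S_t\}$ is a uniformly continuous one-parameter group of triple automorphisms satisfying $h_{t+s} = h_t r_t^* S_t^{**}(h_s)$ and $r_{t+s} = S_t^{**}(r_s)$. I would then check that $\{r_t S_t\}$ is a uniformly continuous one-parameter group of Jordan $^*$-automorphisms of $A$: the group law follows by pulling the central $r_s$ through $S_t$ via Remark \ref{r invertibility and center in the homotope}, and the Jordan $^*$-homomorphism property is immediate from $S_t\colon A\to (A, \bullet_{r_t}, *_{r_t})$ being a Jordan $^*$-isomorphism together with centrality of $r_t$. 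Proposition \ref{p uniparam semigroups o surjective isometries on unital C*-algebras} then produces a $^*$-derivation $d$ on $A$ with $r_t S_t = e^{td}$.

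The step I expect to require the most care is collapsing the factor $r_t^* S_t^{**}(h_s)$ to $h_s$. This uses Remark \ref{r Sakai's thm on inner derivations}: since $\{r_t S_t\}$ is a uniformly continuous one-parameter group of Jordan $^*$-automorphisms and its generator restricts to a derivation on the commutative C$^*$-algebra $Z(M(A))$, that restriction must vanish, so $(r_t S_t)^{**}$ fixes $Z(M(A))$ pointwise. Hence $h_{t+s} = h_t h_s$ and $r_{t+s} = r_t r_s$, and $t\mapsto h_t$ is a uniformly continuous one-parameter group in the commutative unital C$^*$-algebra $Z(M(A))$. Standard Banach-algebra theory \cite[Proposition 3.1.1]{BratRob1987} yields $h\in Z(M(A))$ with $h_t = e^{th}$. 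Centrality of $h$ forces $L_h$ to commute with $d$, so $T_t = L_{h_t}\circ (r_t S_t) = e^{tL_h}e^{td} = e^{t(L_h+d)}$, and the one-parameter group statement for $\{h_t\}$ and $\{r_t\}$ follows.

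The converse (b)$\Rightarrow$(a) is a direct verification: commutativity of $L_h$ and $d$ gives the factorisation $e^{t(L_h+d)} = L_{e^{th}}\circ e^{td}$, so each $T_t$ is a composition of a $^*$-automorphism with left multiplication by an invertible central element, and Theorem \ref{t Characterization bd OP plus surjective} guarantees orthogonality preservation of the composition. Uniform continuity and the semigroup property are inherited from the exponentials. The main obstacle in the forward direction is precisely the passage from the group structure on $\{S_t\}$ supplied by Theorem \ref{t Wolff one-parameter for OP} to genuine multiplicativity of $\{h_t\}$; as outlined above, centrality of $h_s$ together with the triviality of continuous one-parameter groups of Jordan $^*$-automorphisms on a commutative C$^*$-algebra is exactly what makes this reduction possible.
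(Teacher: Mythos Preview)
Your proposal is correct and follows essentially the same route as the paper: first use symmetry plus surjectivity to force $h_t, r_t$ into the centre, then invoke Theorem~\ref{t Wolff one-parameter for OP} to obtain the one-parameter group $\{S_t\}$, build the auxiliary group $\{r_t S_t\}$ of Jordan $^*$-automorphisms, exploit Remark~\ref{r Sakai's thm on inner derivations} to see that $(r_t S_t)^{**}$ fixes the centre and hence $h_{t+s}=h_t h_s$, and finally apply Proposition~\ref{p uniparam semigroups o surjective isometries on unital C*-algebras} and the exponential representation of $\{h_t\}$ in $Z(M(A))$ to assemble $T_t = e^{t(L_h+d)}$. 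The only cosmetic difference is that you extract the $^*$-derivation $d$ for $\{r_t S_t\}$ slightly earlier than the paper does, and you spell out the converse $(b)\Rightarrow(a)$ in a bit more detail.
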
\smallskip

\begin{remark}\label{R the ht are not a group in the general case} Let us observe that under the more general hypotheses of Theorem \ref{t Wolff one-parameter for OP} we cannot, in general, conclude that the sets $\{r_t:t\in \mathbb{R}\}$ and  $\{h_t:t\in \mathbb{R}\}$ are one-parameter groups. Let $a$ be an element in a C$^*$-algebra $A$. Let us consider the mapping $i L(a,a) : A\to A,$ $x\mapsto i \{a,a,x\} = \frac{i}{2} (a a^* x + x a^*a)$. The elements $a a^*,$ $a^* a$ are positive and in particular hermitian elements (i.e. they have real numerical range in the usual sense employed in \cite{BonsDun73}). An standard argument, applying the representation from $A$ into $B(A)$ via the left and the right multiplication operators given by $a\mapsto L_a$ and $a\mapsto R_a$, proves that $L_{a a^*}$ and $R_{a^* a}$ are hermitian elements in $B(A)$ and the same applies to $\frac12 (L_{a a^*}+R_{a^*a})$. We can therefore conclude that $e^{t i L(a,a) } = e^{\frac{i t }{2} (L_{a a^*}+R_{a^*a})}$ is a surjective real linear isometry for each real $t$ (cf. \cite[Corollary 10.13]{BonsDun73}). That is the set $\{ e^{t i L(a,a) }  : t\in \mathbb{R}\}$ is a one-parameter group of surjective linear isometries on $A$.\smallskip

If $a=e$ is a partial isometry in $A$ the mapping $i L(e,e)$ is precisely $i P_2(e) +\frac{i}{2} P_1 (e)$. Thus $$ T_t =e^{i t L(e,e)} = e^{it} P_2(e) +e^{\frac{i t}{2}} P_1 (e) +  P_0 (e), \ \forall t\in \mathbb{R}.$$ Take, for example, $A=M_2 (\mathbb{C})$ and a partial isometry $e\in A$ such that $e e^* = \frac12 \left(
                   \begin{array}{cc}
                     1 & 1 \\
                     1 & 1 \\
                   \end{array}
                 \right)
$ and $e^* e = \left(
                   \begin{array}{cc}
                     1 & 0 \\
                     0 & 0 \\
                   \end{array}
                 \right)
$. It is easy to check that $$\begin{aligned}  e^{i t L(e,e)} \left(
                                                \begin{array}{cc}
                                                  \alpha_{11} & \alpha_{12} \\
                                                  \alpha_{21} & \alpha_{22} \\
                                                \end{array}
                                              \right) & = e^{it}\frac{\alpha_{11}+\alpha_{21}}{2}  \left( \begin{array}{cc}
                                                   1 &  0 \\ 1 & 0 \\
                                                \end{array}
                                              \right)
                                              + e^{\frac{it}{2}}
                                              \frac{\alpha_{11}-\alpha_{21}}{2} \left( \begin{array}{cc}
                                                  1 &  0 \\ -1 & 0\\
                                                \end{array}
                                              \right) \\
&
 +e^{\frac{it}{2}}
                                              \frac{\alpha_{12}+\alpha_{22}}{2} \left( \begin{array}{cc}
                                                  0 &  1 \\ 0 & 1 \\
                                                \end{array}
                                              \right)
 +
                                              \frac{\alpha_{12}-\alpha_{22}}{2} \left( \begin{array}{cc}
                                                  0 &  1 \\ 0 & -1 \\
                                                \end{array}
                                              \right)
\end{aligned},$$ and thus $h_t=r_t = e^{i t L(e,e)} \left(
                                                \begin{array}{cc}
                                                  1 & 0 \\
                                                  0 & 1 \\
                                                \end{array}
                                              \right) = \left(
                                                \begin{array}{cc}
                                                  \frac{e^{\frac{it}{2}}(e^{\frac{it}{2}}+1)}{2} & \frac{e^{\frac{it}{2}}-1}{2} \\
                                                  \frac{e^{\frac{it}{2}}(e^{\frac{it}{2}}-1)}{2} & \frac{e^{\frac{it}{2}}+1}{2} \\
                                                \end{array}
                                              \right).$ 
It is a bit laborious to check that $$r_t r_s -r_{t+s} = \left(
                                                          \begin{array}{cc}
                                                            \frac{-1}{4} e^{\frac{i s}{2}} (-1+e^{\frac{i s}{2}}) (-1+e^{\frac{i t}{2}})^2 & \frac{1}{4} (-1+e^{\frac{i s}{2}}) (-1+e^{{i t}}) \\
                                                            \frac{-1}{4} e^{\frac{i s}{2}} (-1+e^{\frac{i s}{2}}) (-1+e^{{i t}}) & \frac{1}{4}  (-1+e^{\frac{i s}{2}}) (-1+e^{\frac{i t}{2}})^2 \\
                                                          \end{array}
                                                        \right)\neq 0,$$ for $s,t\in \mathbb{R}\backslash\{0\}$.
We can modify the previous example with $\widetilde{T}_t = e^t e^{t \delta}$ to obtain a more general $h_t = e^t r_t$ with a similar bad behaviour. These conclusions are somehow related to the study developed by S. Pedersen in \cite[Theorem 3.8]{PedS88}, where from a slightly different argument and statemet it is shown that if $\{ S_t = e^{t \delta} : t\in\mathbb{R}\}$ is a uniformly continuous one-parameter semigroup of surjective isometries (i.e. triple automorphisms) on a unital C$^*$-algebra $A,$ where $\delta$ is a triple derivation on $A$, the following are equivalent:\begin{enumerate}[$(1)$]\item $r_s r_t = r_{t+s}$, for all $t,s\in \mathbb{R};$
\item $r_s = (r_t^* S_t) (r_s)$ for all $t,s\in \mathbb{R};$
\item $L_{r_t} (r_t^* S_t) = (r_t^* S_t) L_{r_t}$ for all $t\in \mathbb{R}$;
\item $\delta^2 (1) = \delta(1)^2$,
\end{enumerate} where $r_t=S_t (1)$.\smallskip

By repeating the above arguments with $v= \left(
                                            \begin{array}{cc}
                                              0 & 1 \\
                                              0 & 0 \\
                                            \end{array}
                                          \right),$ and $S_t = e^{i t L(v,v)}$ we can easily check that $r_t =  e^{i t L(v,v)} \left(
                                                                                                                                 \begin{array}{cc}
                                                                                                                                   1 & 0 \\
                                                                                                                                   0 & 1 \\
                                                                                                                                 \end{array}
                                                                                                                               \right)
                                           = e^{ \frac{i t}{2}} \left(
                                                                                                                                 \begin{array}{cc}
                                                                                                                                   1 & 0 \\
                                                                                                                                   0 & 1 \\
                                                                                                                                 \end{array}
                                                                                                                               \right),$
and thus $r_{t+s} = r_t r_s$ for all $s,t\in \mathbb{R}$. The last identity can be directly checked or deduced from the previous equivalences because for $\delta = i L(v,v) $ we have $\delta^2 (1) = \delta(1)^2$. We note that $T_t$ is not symmetric for any non-zero $t$. That is, the final conclusion in Corollary \ref{c Wolff one-parameter for OP symmetric} may hold for one-parameter semigroups of non-necessarily symmetric surjective isometries.\smallskip

\end{remark}

\textbf{Acknowledgements} The authors are grateful to the referee for careful reading of the paper and valuable suggestions and comments.\smallskip

A.M. Peralta partially supported by the Spanish Ministry of Science, Innovation and Universities (MICINN) and European Regional Development Fund project no. PGC2018-093332-B-I00, Junta de Andaluc\'{\i}a grant FQM375 and Proyecto de I+D+i del Programa Operativo FEDER Andalucia 2014-2020, ref. A-FQM-242-UGR18. \smallskip

%
%


\begin{thebibliography}{}
\bibitem{AkPed77} C.A. Akemann, G.K. Pedersen, Ideal perturbations of elements in C$^*$-algebras, \emph{Math. Scand.} \textbf{41}, no. 1, 117--139 (1977).

\bibitem{AlBreExVill09} J. Alaminos, M. Bresar, J. Extremera, A. Villena, Maps preserving zero products, \emph{Studia Math.} \textbf{193}, no. 2, 131--159 (2009).

\bibitem{ArJar03} J. Araujo, K. Jarosz, Biseparating maps between operator algebras, \emph{J. Math. Anal. Appl.} \textbf{282}, 48--55 (2003).

\bibitem{Arendt} W. Arendt, Spectral properties of Lamperti operators, \emph{Indiana Univ. Math. J.} \textbf{32}, no. 2, 199--215 (1983).

%







\bibitem{BonsDun73} F.F. Bonsall, J. Duncan, \emph{Complete Normed Algebras}, Erg. der Math. 80,. Springer-Verlag, Berlin 1973.


\bibitem{BratRob1987} O. Bratteli, D.W. Robinson, \emph{Operator algebras and quantum statistical mechanics. 1. C*- and W*-algebras, symmetry groups, decomposition of states}. Second edition. Texts and Monographs in Physics. Springer-Verlag, New York, 1987.





\bibitem{BuChu92} L.J. Bunce, Ch.-H. Chu,  Compact  operations, multipliers and Radon-Nikodym property in JB$^*$-triples, \emph{Pacific J. Math.} \textbf{153}, 249--265 (1992).


\bibitem{Bur13} M. Burgos, Orthogonality preserving linear maps on C$^*$-algebras with non-zero socles, \emph{J. Math. Anal. Appl.} \textbf{401}, no. 2, 479--487 (2013).



\bibitem{BurFerGarMarPe2008} M. Burgos, F.J. Fern{\' a}ndez-Polo, J.J. Garc{\'e}s, J. Mart{\'i}nez Moreno, A.M. Peralta, Orthogonality preservers in C$^*$-algebras, JB$^*$-algebras and JB$^*$-triples, \emph{J. Math. Anal. Appl.} \textbf{348}, 220--233 (2008).


\bibitem{BurFerGarPe09} M. Burgos, F.J. Fern{\'a}ndez-Polo, J.J. Garc{\'e}s, A.M. Peralta, Orthogonality preservers revisited, \emph{Asian-Eur. J. Math.} \textbf{2}, 387--405 (2009).

\bibitem{BurGarPe11} M. Burgos, J. Garces, A.M. Peralta, Automatic continuity of biorthogonality preservers between compact C$^*$-algebras and von Neumann algebras, \emph{J. Math. Anal. Appl.} \textbf{376} 221--230 (2011).

\bibitem{BurGarPe11StudiaTriples} M. Burgos, J. Garces, A.M. Peralta, Automatic continuity of biorthogonality preservers between weakly compact JB$^*$-triples and atomic JBW$^*$-triples, \emph{Studia Math.} \textbf{204}, no. 2, 97--121 (2011).


\bibitem{BurSanchOr} M. Burgos, J. S{\'a}nchez-Ortega, On mappings preserving zero products, \emph{Linear Multilinear Algebra} \textbf{61}, no. 3, 323--335 (2013).




\bibitem{CheKeLeeWong03} M.A. Chebotar, W.-F.Ke, P.-H. Lee, N.-C.Wong, Mappings preserving zero products, \emph{Studia Math.} \textbf{155}, 77--94 (2003).






%








\bibitem{GarPeUnitJBstaralg20} J. Garc{\'e}s, A.M. Peralta, One-parameter groups of orthogonality preservers on JB$^*$-algebras, preprint 2020.


\bibitem{Gold} S. Goldstein, Stationarity of operator algebras, \emph{J. Funct. Anal.} \textbf{118}, no. 2, 275--308 (1993).
%

%




\bibitem{Harris81} L.A. Harris, A generalization of C$^*$-algebras, \emph{Proc. Lond. Math. Soc.}, \textbf{42} no. 3 (1981) 331-361.







\bibitem{Jar90} K. Jarosz, Automatic continuity of separating linear isomorphisms, \emph{Canad. Math. Bull.} \textbf{33}, no. 2, 139--144 (1990).

%

\bibitem{John96} B.E. Johnson, Symmetric amenability and the nonexistence of Lie and Jordan derivations, \emph{Math. Proc. Cambridge Philos. Soc.} \textbf{120}, no. 3, 455--473 (1996).


\bibitem{Kad51} R.V. Kadison, \textit{Isometries of operator algebras}, {Ann. of Math.} (2) {\bf 54}, 325--338 (1951).


\bibitem{Ka} W. Kaup, A Riemann Mapping Theorem for bounded symmentric domains in complex Banach spaces, \emph{Math. Z.} \textbf{183}, 503--529 (1983).




\bibitem{LeuTsaiWong12} C.-W. Leung, C.-W. Tsai, N.-C. Wong, Linear disjointness preservers of W$^*$-algebras, \emph{Math. Z.} \textbf{270}, 699--708  (2012).

\bibitem{LeuWong10} C.-W. Leung, N.-C. Wong, Zero product preserving linear maps of CCR C$^*$-algebras with Hausdorff spectrum, \emph{J. Math. Anal. Appl.} \textbf{361}, 187--194 (2010).



\bibitem{LiuChouLiaoWong2018} J.-H. Liu, C.-Y. Chou, C.-J. Liao, N.-C. Wong, Disjointness preservers of AW$^*$-algebras, \emph{Linear Algebra Appl.} \textbf{552}, 71--84 (2018).

\bibitem{LiuChouLiaoWong2018b} J.-H. Liu, C.-Y. Chou, C.-J. Liao, N.-C. Wong, Linear disjointness preservers of operator algebras and related structures, \emph{Acta Sci. Math. (Szeged)} \textbf{84}, no. 1-2, 277--307 (2018).



%


\bibitem{OikPe13} T. Oikhberg, A.M. Peralta, Automatic continuity of orthogonality preservers on a non-commutative $L_p(\tau)$-space, \emph{J. Funct. Anal.} \textbf{264}, no. 8, 1848--1872 (2013).

\bibitem{PatSinc72} A.L.T. Paterson, A.M. Sinclair, Characterisation of isometries between C$^*$-algebras, \emph{J. London Math. Soc.} (2) \textbf{5}, 755--761 (1972).


\bibitem{Ped} G.K. Pedersen, \emph{C$^*$-algebras and their automorphism groups}, Academic Press, London 1979.

\bibitem{PedS88} S. Pedersen, Groups of isometries on operator algebras, \emph{Studia Math.} \textbf{90}, no. 2, 103--116 (1988).








\bibitem{Sak60} S. Sakai, On a conjecture of Kaplansky, \emph{Tohoku Math. J.}, \textbf{12}, 31-33 (1960).

\bibitem{Sa} S. Sakai, \textit{C*- and W*-algebras}, Springer-Verlag, Berlin-New York, 1971.



\bibitem{Tak} M. Takesaki,
\newblock {\em Theory of operator algebras I},
\newblock Springer, New York, 2003.



\bibitem{TsaiWong10} C.-W. Tsai, N.-C. Wong, Linear orthogonality preservers of standard operator algebras, \emph{Taiwanese J. Math.} \textbf{14}, 1047--1053 (2010).


\bibitem{Wolff94} M. Wolff, Disjointness preserving operators in C$^*$-algebras, \emph{Arch. Math.} \textbf{62}, 248--253 (1994).

\bibitem{Wong2005}  N.-C. Wong, Triple homomorphisms of C$^*$-algebras, \emph{Southeast Asian Bulletin of Mathematics} \textbf{29},
401--407 (2005).

\bibitem{Wong2007} N.-C. Wong, Zero product preservers of C$^*$-algebras, \emph{Contemporary Math.} \textbf{435}, 377--380 (2007).



\bibitem{Zettl} H. Zettl, A characterization of ternary rings of operators, \emph{Adv. in Math.} \textbf{48},  117-143 (1983).

\end{thebibliography}


\medskip\medskip

\end{document}